\newcommand{\scrptQ}{\mathcal Q}
\newcommand{\scrptR}{\mathcal R}
\newcommand{\scrptV}{\mathcal V}
\newcommand{\bbP}{\mathbb P}
\newcommand{\bbS}{\mathbb S}
\newcommand{\bbZ}{\mathbb Z}
\def\into{{\rightarrowtail}}
\def\onto{\twoheadrightarrow}
\newcommand{\pseudofrac}[3]{%
  \begin{subarray}{l}#1\\#2\\#3\end{subarray}%
}
\DeclareMathOperator{\Hom}{Hom}
\newcommand{\niso}{\cong}
\DeclareMathOperator{\Ext}{Ext}
\DeclareMathOperator{\Tor}{Tor}
\DeclareMathOperator{\im}{im}
\DeclareMathOperator{\drsum}{\oplus}
\DeclareMathOperator{\Drsum}{\bigoplus}
\DeclareMathOperator{\tnsr}{\otimes}
\DeclareMathOperator{\TnsrAlg}{\bigotimes}
\DeclareMathOperator{\ExtAlg}{\bigwedge}
\DeclareMathOperator{\wdg}{\wedge}
\DeclareMathOperator{\Sym}{Sym}
\DeclareMathOperator{\GL}{GL}
\DeclareMathOperator{\fmod}{mod}
\DeclareMathOperator{\Proj}{Proj}
\theoremstyle{definition}
\newtheorem{defn}{Definition}[section]
\newtheorem{definition}[defn]{Definition}
\newtheorem{example}[defn]{Example}
\theoremstyle{plain}
\newtheorem{theorem}[defn]{Theorem}
\newtheorem*{theorem*}{Theorem}
\newtheorem{lemma}[defn]{Lemma}
\newtheorem{claim}[defn]{Claim}
\theoremstyle{remark}
\begin{document}

\title[Elementary Equivariant Modules]{Elementary Equivariant Modules}

%\dedicatory{Dedicated to...}

\author[Author]{Mikhail Gudim} 
\address{} \email{}

\subjclass{} 

\date{\today} 

\maketitle

\begin{abstract}
We study equivariant modules over $\GL(V)$ over the polynomial ring $R=\Sym V$. We introduce for every partition $\lambda$ the elementary equivariant module $M_{\lambda}$. Then we prove that any finitely generated equivariant module admits a filtration with associated graded being the direct sum of modules of only two kinds: either $M_{\lambda}$ or truncations of $M_{\lambda}$. We show that each $M_{\lambda}$ has a linear resolution and describe also the resolution of its truncations.  
\end{abstract}

\makeatletter
\@starttoc{toc}
\makeatother

\section{Introduction}

We will work in the category of finitely generated $\GL(V)$-equivariant modules over the polynomial ring (see Section ~\ref{sec:CatMod} for definitions). This and related categories are the subject of \cite{SamSnowd} where a much more elaborate treatment is given. In our paper we concentrate on studying the resolutions of what we will call (see Section~\ref{sec:Elem}) "elementary equivariant modules associated with partition $\lambda$" and denote by $M_{\lambda}$. To our knowledge, these modules first appeared in Section 2.2 of \cite{SamSnowd} where their properties are discussed. One thing that makes $M_{\lambda}$'s remarkable is that every equivariant $\GL(V)$-module has a filtration with the associated graded being the direct sum of either $M_{\lambda}$'s or their truncations. This is our Theorem~\ref{thm:filt}, but this was already known (see Corollary 2.2.7 of \cite{SamSnowd}). It turns out that the resolutions of $M_{\lambda}$'s are linear and their terms are described in Theorem~\ref{thm:main}. As Steven Sam explained to me, this result can be obtained (see Section~\ref{subsec:Geom}) using the so-called geometric technique of Weyman (see Theorem 5.1.2 of \cite{Wey}). However, our method is completely algebraic and it also allows to study the resolutions of truncations of $M_{\lambda}$ which we do in Section~\ref{sec:ResTr}. Our approach is to analyze the complex which computes $\Tor^R_{\bullet}(M_{\lambda}, \Bbbk)$ in explicit way. The main difficulty in this analysis is to establish bijection between irreducible summands of tensor product of three representations induced by associativity. In general, this is a pretty hard problem (see Section 5 of \cite{Knu-Tao-Wood}), but we only need a very special case of it. This is done in Section~\ref{sec:Emb}.

I would like to thank Steve Sam, who pointed out Lemma~\ref{lem:Sam} to me and read a draft of this paper. Also I thank Ragnar-Olaf Buchweitz for helpful suggestions.

\subsection*{Notation and Conventions} Let $\Bbbk$ be an algebraically closed field of characteristic zero and $V$ be a vector space over $\Bbbk$ of dimension $n$ with basis $x_1, \cdots x_n$. We will think of elements of $V$ as having degree $1$ and elements of $\Bbbk$ have degree $0$ of course. We will only work with finitelly generated graded modules and graded maps. Such category will be denoted by $S-\fmod$ where any graded ring can take place of $S$.

Let $\lambda=(\lambda_1, \lambda_2, \cdots \lambda_m)$ be some partition. By $\ExtAlg^{\lambda}V$ we will mean the tensor product $\ExtAlg^{\lambda_1}V \tnsr \ExtAlg^{\lambda_2}V \tnsr \cdots \ExtAlg^{\lambda_m}V$ and similarly for the symmetric powers. We will denote the Schur functor corresponding to $\lambda$ by $\bbS_{\lambda} V$ and most often we will abbreviate it to simply $\bbS_{\lambda}$. 

\section{Technical Results}
\label{sec:TechRes}

\subsection{Coassociativity of the Exterior Algebra}

See Section I of \cite{Ak-Buch-Wey} for details of this section. The exterior algebra $\ExtAlg V$ is a graded-commutative Hopf algebra. Let $l:=(l_1, \cdots l_t)$, $a:=(a_1, \cdots a_t)$ and $b:=(b_1, \cdots b_t)$ be vectors with all integer components. Consider the map

\[
\Phi(l,a) \colon \ExtAlg^{l}V= \ExtAlg^{l_1}V \tnsr \cdots \ExtAlg^{l_t}V \to (\ExtAlg^{l_1-a_1}V \tnsr \cdots \ExtAlg^{l_t-a_t}V) \tnsr (\ExtAlg^{a_1}V \tnsr \cdots \ExtAlg^{a_t}V)=\ExtAlg^{l-a}V \tnsr \ExtAlg^{a}V 
\]

The map $\Phi(l,a)$ is a tensor product of the appropriate components of comultiplications followed by a permutation of factors.

As a consequence of coassociativity of $\ExtAlg V$ the following diagram commutes:

\begin{figure}[h!]

\begin{tikzpicture}
\matrix(m) [matrix of math nodes, 
row sep=4.0em, column sep=8.0em, 
text height=1.5ex, text depth=0.25ex]
{\ExtAlg^l V & \ExtAlg^{l-b}V \tnsr \ExtAlg^b V\\
 & \ExtAlg^{l-b-a}V \tnsr \ExtAlg^{a}V \tnsr \ExtAlg^bV \\
\ExtAlg^{l-a}V \tnsr \ExtAlg^aV & \ExtAlg^{l-a-b}V \tnsr \ExtAlg^bV \tnsr \ExtAlg^aV \\};

\path[->,font=\scriptsize] 
(m-1-1) edge node[above]{$\Phi(l,b)$}(m-1-2)
(m-3-1) edge node[above]{$\Phi(l-a,b) \tnsr 1_{\ExtAlg^aV}$}(m-3-2)
(m-1-1) edge node[right]{$\Phi(l,a)$}(m-3-1)
(m-1-2) edge node[right]{$\Phi(l-b,a) \tnsr 1_{\ExtAlg^bV}$} (m-2-2) 
(m-3-2) edge node[right]{$\sigma$} node[left]{$\niso$}(m-2-2);
\end{tikzpicture}
\caption{}
\label{fig:coass}
\end{figure}
\FloatBarrier

The map $\sigma$ is a permutation of factors.

\subsection{Representations of $\GL(V)$}
\label{subsec:Rep}

Every finite-dimensional polynomial (more generally, rational) representation of $G:=\GL(V)$ decomposes into direct sum of irreducible ones. Every irreducible polynomial representation of $G$ is isomorphic to a Schur Functor. See Section 8.2 of ~\cite{Fult} for a reference.

Let $\lambda:=(\lambda_1, \cdots \lambda_n)$ be a partition. By $D(\lambda)$ we will mean the Young diagram of $\lambda$ which we draw as a set of boxes in the plane. For example, the Young diagram of the partition $(3,2,2,1)$ is

\[
{\scriptsize\young(\hfil \hfil \hfil,\hfil \hfil,\hfil \hfil,\hfil)}
\]

The conjugate partition of $\lambda$ is the partition $\tilde{\lambda}=(\tilde{\lambda}_1, \cdots \tilde{\lambda}_t)$ where $\tilde{\lambda}_i$ is the number of boxes in the $i$-th column of $D(\lambda)$. For example, the conjugate partition of $(3,2,2,1)$ is the partition $(4,3,1.0)$ and its Young diagram is:

\[
{\scriptsize\young(\hfil \hfil \hfil \hfil,\hfil \hfil \hfil,\hfil)}
\]

Let us recall:

\begin{theorem}
There is only one representation occurring in both $\Sym_{\lambda} V$ and $\ExtAlg^{\tilde{\lambda}} V$. This common representation is the Schur functor $\bbS_{\lambda}$ and can be realized as 

\begin{enumerate}
\item As an image of the (unique) map $\Sym_{\lambda} V \to \underset{(i,j) \in D(\lambda)}{\TnsrAlg} V(i,j) \to \ExtAlg^{\tilde{\lambda}} V$ and thus as a quotient of $\Sym_{\lambda} V$, or

\item as an image of the (unique) map $\ExtAlg^{\tilde{\lambda}} V \to \underset{(i,j) \in D(\lambda)}{\TnsrAlg} V(i,j) \to \Sym_{\lambda} V$ and thus as a quotient of $\ExtAlg^{\tilde{\lambda}} V$
\end{enumerate}

Here $V(i,j)$ means the copy of $V$ indexed by the box $(i,j)$ of $D(\lambda)$. The first map in (1) is the tensor product of components of comultiplications in the symmetric algebra: $\Sym_{\lambda_i}V \to \underset{1 \leq j \leq \lambda_i}{\TnsrAlg}V(i,j)$ and the second map is the tensor product of projections $\underset{1 \leq i \leq \tilde{\lambda}_j}{\TnsrAlg}V(i,j) \to \ExtAlg^{\tilde{\lambda}_j}V$
\end{theorem}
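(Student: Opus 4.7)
The plan is to use semisimplicity of the category of polynomial $\GL(V)$-modules to reduce the statement to two numerical facts and one explicit nonvanishing check. For any partition $\mu$, iterating Pieri's rule (or equivalently, using highest-weight theory) yields
\[
\bigl[\Sym_\lambda V : \bbS_\mu V\bigr] = K_{\mu\lambda}, \qquad \bigl[\ExtAlg^{\tilde\lambda}V : \bbS_\mu V\bigr] = K_{\tilde\mu\,\tilde\lambda},
\]
where $K_{-,-}$ denotes a Kostka number. The first multiplicity is nonzero only when $\mu \trianglerighteq \lambda$ in dominance order, the second only when $\tilde\mu \trianglerighteq \tilde\lambda$, i.e.\ $\mu \trianglelefteq \lambda$. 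These two conditions intersect only at $\mu = \lambda$, where $K_{\lambda\lambda}=K_{\tilde\lambda\,\tilde\lambda}=1$, so $\bbS_\lambda$ is the unique common irreducible constituent and it occurs with multiplicity one on each side.

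By Schur's lemma,
\[
\dim_\Bbbk \Hom_G\!\bigl(\Sym_\lambda V,\, \ExtAlg^{\tilde\lambda}V\bigr) \;=\; \sum_{\mu} K_{\mu\lambda}\,K_{\tilde\mu\,\tilde\lambda} \;=\; 1,
\]
so the $G$-equivariant composition described in (1) is determined up to scalar as soon as we exhibit a single element on which it is nonzero. I would evaluate it at $x_1^{\lambda_1}\tnsr x_2^{\lambda_2}\tnsr \cdots \tnsr x_m^{\lambda_m}\in \Sym_\lambda V$: the componentwise comultiplications in $\Sym V$ send $x_i^{\lambda_i}$ to $\lambda_i!\cdot x_i^{\tnsr \lambda_i}$, so after reindexing by columns the $j$-th column carries $x_1\tnsr x_2\tnsr\cdots\tnsr x_{\tilde\lambda_j}$ (since row $i$ contributes to column $j$ exactly when $i\le \tilde\lambda_j$), which projects to the nonzero pure wedge $x_1\wdg \cdots \wdg x_{\tilde\lambda_j}\in\ExtAlg^{\tilde\lambda_j}V$. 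The dual check for the composition in (2), applied to the tensor product of pure wedges $x_1\wdg\cdots\wdg x_{\tilde\lambda_j}$, produces a nonzero scalar multiple of $x_1^{\lambda_1}\tnsr\cdots\tnsr x_m^{\lambda_m}$ in $\Sym_\lambda V$ by the symmetric row-reindexing, so it too is nonzero.

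To identify the images, observe that in the semisimple category of polynomial $\GL(V)$-modules the image in (1) is simultaneously a subrepresentation of $\ExtAlg^{\tilde\lambda}V$ and a quotient of $\Sym_\lambda V$; hence its irreducible constituents lie in $\{\bbS_\mu:\mu\trianglelefteq\lambda\}\cap\{\bbS_\mu:\mu\trianglerighteq\lambda\}=\{\bbS_\lambda\}$. Combined with the multiplicity-one facts, this forces the image to be exactly one copy of $\bbS_\lambda$, and the same argument with the roles of $\Sym_\lambda V$ and $\ExtAlg^{\tilde\lambda}V$ swapped handles the realization (2). The only delicate point is the nonvanishing of the explicit element and the matching of column/row bookkeeping conventions for the comultiplications with those implicit in the Schur functor definition; everything else reduces to standard Kostka-number combinatorics and Schur's lemma.
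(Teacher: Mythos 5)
Your proposal is correct, and in fact it supplies an argument where the paper gives none: the theorem is stated there as a recalled standard fact (prefaced by ``Let us recall,'' with pointers to Fulton's \emph{Young Tableaux} and to Akin--Buchsbaum--Weyman), so there is no in-paper proof to compare against. Your route --- computing the multiplicities $[\Sym_\lambda V:\bbS_\mu V]=K_{\mu\lambda}$ and $[\ExtAlg^{\tilde\lambda}V:\bbS_\mu V]=K_{\tilde\mu\tilde\lambda}$, intersecting the two dominance-order conditions to isolate $\mu=\lambda$ with multiplicity one, concluding $\dim\Hom_G(\Sym_\lambda V,\ExtAlg^{\tilde\lambda}V)=1$ by Schur's lemma, and then exhibiting nonvanishing on an explicit highest-weight-type vector --- is exactly the standard proof in the cited sources, and every step checks out in characteristic zero (where $\lambda_i!\neq 0$ and the category is semisimple). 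Two small points of hygiene: the whole statement implicitly assumes $\tilde\lambda_1\le\dim V$, i.e.\ $\lambda$ has at most $n$ rows, since otherwise both $\bbS_\lambda V$ and $\ExtAlg^{\tilde\lambda}V$ vanish; and in the dual check for (2) the composite applied to the tensor product of pure wedges is not literally a scalar multiple of $x_1^{\lambda_1}\tnsr\cdots\tnsr x_m^{\lambda_m}$ but rather a signed sum of monomial tensors in which that particular term occurs with coefficient $+1$ (only the identity permutations in each column reproduce it), which is what actually certifies nonvanishing. Neither issue is a gap in the argument.
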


For each semistandard tableaux $T$ one can write down an element $v_T$ of $\ExtAlg^{\tilde{\lambda}}V$ which is a tensor product of wedge products of elements, the $i$-th factor being the wedge product of elements (in order) occuring in the $i$-th column of $T$. These elements are linearly independent and span the subspace of $\ExtAlg^{\tilde{\lambda}}V$ isomorphic to $\bbS_{\lambda}$. Moreover, the images of those elements under the composite map in (2) above form a basis for the subspace of $\Sym_{\lambda}V$ isomorphic to  $\bbS_{\lambda}$.

According to our grading conventions, $\bbS_{\lambda}$ is a graded vector space concentrated in the degree $|\lambda|$- the number of boxes in $D(\lambda)$. For example, both $\bbS_{(3,2,2,1)}$ and $\bbS_{(4,3,1.0)}$ are of degree $3+2+2+1=4+3+1+0=8$.

\subsection{Pieri Inclusions}

For a partition $\lambda$ define $VS(\lambda, k)$ (here "$VS$"  stands for "vertical strip") to be the set of all partitions obtained from $\lambda$ by adjoining $k$ boxes, no two in the same row. Similarly let $HS(\lambda, k)$ (here "$HS$" stands for "horizontal strip") to be the set of all partitions obtained from $\lambda$ by adjoining $k$ boxes with no two in the same column. Let us recall the well-known Pieri Formulas (see Section 6.1 of \cite{Fult-Harr}):

\[
\bbS_{\lambda} \tnsr \Sym_k V \niso \underset{\eta \in HS(\lambda, k)}{\Drsum} \bbS_{\eta}
\] 

and

\[
\bbS_{\lambda} \tnsr \ExtAlg^k V \niso \underset{\eta \in VS(\lambda, k)}{\Drsum} \bbS_{\eta}
\] 

For a fixed $\lambda$ and $k$ let us take $\eta \in VS(\lambda, k)$. Suppose $a_j$ boxes were added to the $j$-th column of $\lambda$ to get $\eta$ (in particular, the $a_j$'s add up to $k$) Let $a:=(a_1, \cdots a_j)$. The inclusion $\bbS_{\eta} \into \bbS_{\lambda} \tnsr \ExtAlg^k V$ is the composition (see proof of Theorem IV.2.1 in \cite{Ak-Buch-Wey})

\[
\bbS_{\eta}  \into \ExtAlg^{\tilde{\eta}}V \to \ExtAlg^{\tilde{\lambda}}V \tnsr (\ExtAlg^{a} V) \to \ExtAlg^{\tilde{\lambda}}V \tnsr \ExtAlg^k V \to \bbS_{\lambda} \tnsr \ExtAlg^kV
\]

\begin{example}
\label{ex:Pieri1}
The formula for the Pieri inclusion $\bbS_{(2,2)}V \into \Sym_2V \tnsr \Sym_2V$ is:

\begin{eqnarray*}
{\scriptsize\young(ab,cd)} \mapsto \pseudofrac{a}{\wdg}{c} \tnsr \pseudofrac{b}{\wdg}{d} \mapsto (\pseudofrac{a}{\tnsr}{c} - \pseudofrac{c}{\tnsr}{a}) \tnsr (\pseudofrac{b}{\tnsr}{d} - \pseudofrac{d}{\tnsr}{b}) \mapsto \\
\pseudofrac{a}{\tnsr}{c} \tnsr \pseudofrac{b}{\tnsr}{d}-\pseudofrac{a}{\tnsr}{c} \tnsr \pseudofrac{d}{\tnsr}{b}-\pseudofrac{c}{\tnsr}{a} \tnsr \pseudofrac{b}{\tnsr}{d}+\pseudofrac{c}{\tnsr}{a} \tnsr \pseudofrac{d}{\tnsr}{b} \mapsto \\
ab \tnsr cd-ad \tnsr cb-cb \tnsr ad+cd \tnsr ab
\end{eqnarray*}

\end{example}

\begin{example}
\label{ex:Pieri2}
The formula for the Pieri inclusion $\bbS_{(2,1)}V \into V \tnsr \Sym_2V$ is:

\begin{eqnarray*}
{\scriptsize\young(ab,c)} \mapsto \pseudofrac{a}{\wdg}{c} \tnsr b \mapsto \pseudofrac{a}{\tnsr}{c} \tnsr b - \pseudofrac{c}{\tnsr}{a} \tnsr b \mapsto c \tnsr ab - a \tnsr cb
\end{eqnarray*}

\end{example}

\subsection{Embeddings into $V \tnsr \bbS_{\lambda} \tnsr \ExtAlg^kV$}
\label{sec:Emb}

We will need a technical result which describes the embeddings of irreducible representations into $V \tnsr \bbS_{\lambda} \tnsr \ExtAlg^k V$. First let us think about this tensor product with brackets placed as follows: $V \tnsr (\bbS_{\lambda} \tnsr \ExtAlg^k V)$. With this placement of brackets we will think about each irreducible summand $\eta$ as being obtained from the diagram of $\lambda$ by first adding $k$ boxes according to Pieri rule (i.e. first tensor with $\ExtAlg^k V$) and then adding one more box (tensor with $V$). Moreover, we will mark the $k$ boxes that were added first with the symbol "$\ExtAlg$" and the box which was added last will be marked with the symbol "$V$". 

\begin{example}

Suppose the dimension of $V$ is $3$. Consider the tensor product $V \tnsr \bbS_{(2,1,0)} \tnsr \ExtAlg^2 V$. The irreducible summands of $\bbS_{(2,1,0)} \tnsr \ExtAlg^2 V$ are obtained from the diagram of $\lambda$ by adding two boxes according to the Pieri rule. Let us mark these boxes with the symbol '$\ExtAlg$'. Thus the summands of $\bbS_{(2,1,0)} \tnsr \ExtAlg^2 V$ are:

{\scriptsize\young( \hfil \hfil \ExtAlg,\hfil \ExtAlg)}, {\scriptsize\young( \hfil \hfil \ExtAlg,\hfil,\ExtAlg)}, {\scriptsize\young( \hfil \hfil,\hfil \ExtAlg,\ExtAlg)}

Now to get all the summands of $V \tnsr (\bbS_{(2,1,0)} \tnsr \ExtAlg^2 V)$ we add one more box to all the summands above. Let us mark this box with the symbol "$V$". We get:

{\scriptsize\young( \hfil \hfil \ExtAlg V,\hfil \ExtAlg)}, {\scriptsize\young( \hfil \hfil \ExtAlg,\hfil \ExtAlg V)}, {\scriptsize\young( \hfil \hfil \ExtAlg,\hfil \ExtAlg,V)},\\

{\scriptsize\young( \hfil \hfil \ExtAlg V,\hfil,\ExtAlg)}, {\scriptsize\young( \hfil \hfil \ExtAlg,\hfil V,\ExtAlg)}, {\scriptsize\young( \hfil \hfil \ExtAlg,\hfil,\ExtAlg,V)},\\

{\scriptsize\young( \hfil \hfil V,\hfil \ExtAlg,\ExtAlg)}, {\scriptsize\young( \hfil \hfil,\hfil \ExtAlg,\ExtAlg V)}, {\scriptsize\young( \hfil \hfil,\hfil \ExtAlg,\ExtAlg,V)}.
\end{example}

Note that with this bracket placement no "$\ExtAlg$"'s will be to the right or below the "$V$". Each such (labeled) diagram $\eta$ defines an embedding $\bbS_{\eta} \into V \tnsr (\bbS_{\lambda}  \tnsr \ExtAlg^k V)$ in the following way: suppose $\eta$ has $a_j$ $\ExtAlg$'s in the column $j$ and the $V$ is in the column $s$ Let $a:=(a_1, \cdots a_t)$ and $b$  be the vector all of whose components are zero, except $1$ in the position $s$.

\begin{eqnarray*}
\bbS_{\eta} \into \ExtAlg^{\tilde{\eta}} V \xrightarrow{\Phi(\tilde{\eta}, b)} V \tnsr \ExtAlg^{\tilde{\eta}-b}V \xrightarrow{1_V \tnsr \Phi(\tilde{\eta}-b,a)} V \tnsr (\ExtAlg^{\tilde{\eta}-b-a}V \tnsr \ExtAlg^aV) \to V \tnsr (\bbS_{\lambda} \tnsr \ExtAlg^kV)
\end{eqnarray*}

In other words this labeling of $\eta$ tells us to first separate off the box labeled "$V$" by applying appropriate component of comultiplication on exterior algebra. Then we separate off all the "$\ExtAlg$"'s. Precomposing with the inclusion $\bbS_{\eta} \into \ExtAlg^{\tilde{\eta}} V$ and postcomposing with the projection gives us the desired embedding.

Now let us think about this tensor product with brackets placed in a different way: $(V \tnsr \bbS_{\lambda}) \tnsr \ExtAlg^k V$. With this placement of the brackets we will think about each irreducible summand $\eta'$ as being obtained from the diagram of $\lambda$ by first adding one box (i.e. first tensor with $V$) and then adding $k$ more boxes (tensor with $\ExtAlg^k V$). Again, we will mark the box that was added first with the symbol "$V$" and the $k$ boxes which were added later will be marked with the symbol "$\ExtAlg$". 

\begin{example}
Now we think about the tensor product $V \tnsr \bbS_{(2,1,0)} \tnsr \ExtAlg^2 V$ with brackets placed as $(V \tnsr \bbS_{(2,1,0)}) \tnsr \ExtAlg^2 V$. The summands of $V \tnsr \bbS_{(2,1,0)}$ are:

{\scriptsize\young( \hfil \hfil V,\hfil)}, {\scriptsize\young( \hfil \hfil,\hfil V)}, {\scriptsize\young( \hfil \hfil,\hfil,V)}

Now we add two "$\ExtAlg$"'s according to the Pieri rule:

{\scriptsize\young( \hfil \hfil V\ExtAlg,\hfil \ExtAlg)}, {\scriptsize\young( \hfil \hfil V\ExtAlg,\hfil,\ExtAlg)}, {\scriptsize\young( \hfil \hfil V,\hfil \ExtAlg,\ExtAlg)}, \\

{\scriptsize\young( \hfil \hfil \ExtAlg,\hfil V\ExtAlg)}, {\scriptsize\young( \hfil \hfil \ExtAlg,\hfil V,\ExtAlg)},\\

{\scriptsize\young( \hfil \hfil \ExtAlg,\hfil \ExtAlg,V)}, {\scriptsize\young( \hfil \hfil \ExtAlg,\hfil,V,\ExtAlg)}, {\scriptsize\young( \hfil \hfil,\hfil \ExtAlg,V\ExtAlg)}, {\scriptsize\young( \hfil \hfil,\hfil \ExtAlg,V,\ExtAlg)}

\end{example}

Note that this time "$V$" is "inside" of "$\ExtAlg$"'s, i.e. all the "$\ExtAlg$" occur to the right and below the "$V$". Each such (labeled) diagram $\eta'$ defines an embedding $\bbS_{\eta'} \into (V \tnsr \bbS_{\lambda}) \tnsr \ExtAlg^k V$ in the following way: suppose $\eta$ has $a'_j$ $\ExtAlg$'s in the column $j$ and the $V$ is in the column $s'$. Let $a':=(a'_1, \cdots a'_t)$ and $b'$  be the vector all of whose components are zero, except $1$ in the position $s'$.

\begin{eqnarray*}
\bbS_{\eta'} \into \ExtAlg^{\tilde{\eta'}} V \xrightarrow{\Phi(\tilde{\eta'}, a')} \ExtAlg^{\tilde{\eta'}-a'}V \tnsr \ExtAlg^{a'}V \xrightarrow{\Phi(\tilde{\eta'}-a',b') \tnsr 1_{\ExtAlg^{a'}V}} (V \tnsr \ExtAlg^{\tilde{\eta'}-a'-b'}V)\tnsr \ExtAlg^{a'}V \to V \tnsr (\bbS_{\lambda} \tnsr \ExtAlg^kV)
\end{eqnarray*}

We will need to know "how to move "$V$" from outside to inside". More precisely, suppose we have a labeled diagram $\eta$ which comes from the bracket placement $V \tnsr (\bbS_{\lambda} \tnsr \ExtAlg^k V)$. This labeled diagram defines the embedding $\bbS_{\eta} \into V \tnsr (\bbS_{\lambda} \tnsr \ExtAlg^k V)$. The question is: which labbeled diagram $\eta'$ which comes from bracket placement $(V \tnsr \bbS_{\lambda}) \tnsr \ExtAlg^k V$ defines the same embedding as $\eta$? We will write $\eta \sim \eta'$ when $\eta$ and $\eta'$ define the same embedding into $V \tnsr \bbS_{\lambda} \tnsr \ExtAlg^k V$. 

The skew shape $\eta-\lambda$ has at most one row with two boxes. Suppose $\eta-\lambda$ has a row with two boxes, then multiplicity of such $\eta$ in $V \tnsr \bbS_{\lambda} \tnsr \ExtAlg^k V$ is equal to $1$, so there is only one labeled diagram $\nu$ coming from bracket placement $(V \tnsr \bbS_{\lambda}) \tnsr \ExtAlg^k V$ that has the same shape as $\eta$.

\begin{example}
In the above example we have the following equivalences:

{\scriptsize\young( \hfil \hfil \ExtAlg V,\hfil \ExtAlg)} $\sim$ {\scriptsize\young( \hfil \hfil V\ExtAlg,\hfil \ExtAlg)}, {\scriptsize\young( \hfil \hfil \ExtAlg,\hfil \ExtAlg V)} $\sim$ {\scriptsize\young( \hfil \hfil \ExtAlg,\hfil V\ExtAlg)}, {\scriptsize\young( \hfil \hfil \ExtAlg V,\hfil,\ExtAlg)} $\sim$ {\scriptsize\young( \hfil \hfil V\ExtAlg,\hfil,\ExtAlg)},{\scriptsize\young( \hfil \hfil,\hfil \ExtAlg,\ExtAlg V)} $\sim$ {\scriptsize\young( \hfil \hfil,\hfil \ExtAlg,V\ExtAlg)}. 

\end{example}

Suppose now that the skew-shape $\eta-\lambda$ has no row with two boxes. If there is no column that has both "$\ExtAlg$"'s and "$V$" then this $\eta$ appears in both bracket placements and no moving is necessary.

\begin{example}
In our example we see that the labeled diagrams

{\scriptsize\young( \hfil \hfil \ExtAlg,\hfil \ExtAlg,V)}, {\scriptsize\young( \hfil \hfil \ExtAlg,\hfil V,\ExtAlg)}, {\scriptsize\young( \hfil \hfil \ExtAlg,\hfil \ExtAlg,V)}.

appear in both bracket placements.
\end{example}

Finally, suppose that the skew shape $\eta-\lambda$ has no row with two boxes and some column has both "$\ExtAlg$"'s and "$V$". Then we need to "slide the "$V$" up". This does not change the embedding. One can easily see it by writing down the corresponding embeddings and using coassotiativity of $\ExtAlg V$ (see Figure ~\ref{fig:coass}).

\begin{example}
{\scriptsize\young( \hfil \hfil \ExtAlg,\hfil,\ExtAlg,V)} $\sim$ {\scriptsize\young( \hfil \hfil \ExtAlg,\hfil,V,\ExtAlg)}, {\scriptsize\young( \hfil \hfil,\hfil \ExtAlg,\ExtAlg,V)} $\sim$ {\scriptsize\young( \hfil \hfil,\hfil \ExtAlg,\ExtAlg,V)}.
\end{example}

We will also need the Lemma 2.1 from \cite{SamWey} :

\begin{lemma}
\label{lem:Sam}
Let $\nu$ be a partition and take $\mu \in HS(\nu)$, i.e. the skew shape $\mu-\nu$ has no two boxes in the same comlumn. This means that there exist (unique) Pieri inclusion $\bbS_{\mu} \into R \tnsr \bbS_{\nu}$, which induces the map of $R$-modules $R \tnsr \bbS_{\mu} \to R \tnsr \bbS_{\nu}$. Suppose now we take $\eta$ to be any partition such that $\bbS_{\eta}$ occurs in both $R \tnsr \bbS_{\mu}$ and $R \tnsr \bbS_{\nu}$. Then the composition $\bbS_{\eta} \into R \tnsr \bbS_{\mu} \to R \tnsr \bbS_{\nu}$ is not zero. 
\end{lemma}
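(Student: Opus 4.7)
The plan is to reduce the claim, via Schur's lemma, to the statement that composing two Pieri inclusions with multiplication in $R$ recovers the direct Pieri inclusion, and then to verify this by evaluating on a highest weight vector.

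First, the composition $\bbS_{\eta} \into R \tnsr \bbS_{\mu} \to R \tnsr \bbS_{\nu}$ is $\GL(V)$-equivariant, and by degree it lands in the summand $\Sym^{|\eta|-|\nu|}V \tnsr \bbS_{\nu}$. Since $\eta-\nu$ is a horizontal strip, Pieri's rule gives that $\bbS_{\eta}$ appears in this summand with multiplicity exactly one. By Schur's lemma the composition equals a scalar times the unique (up to scalar) Pieri inclusion $\bbS_{\eta} \into \Sym^{|\eta|-|\nu|}V \tnsr \bbS_{\nu}$, and it remains to show that this scalar is nonzero.

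Next, writing $a=|\eta|-|\mu|$ and $b=|\mu|-|\nu|$, I would factor the composition as
\[
\bbS_{\eta} \xrightarrow{\,p_1\,} \Sym^{a}V \tnsr \bbS_{\mu} \xrightarrow{\,1 \tnsr p_2\,} \Sym^{a}V \tnsr \Sym^{b}V \tnsr \bbS_{\nu} \xrightarrow{\,m \tnsr 1\,} \Sym^{a+b}V \tnsr \bbS_{\nu},
\]
where $p_1, p_2$ are the Pieri inclusions and $m \colon \Sym^{a}V \tnsr \Sym^{b}V \to \Sym^{a+b}V$ is multiplication in $R$. The plan is to evaluate this composition on a highest weight vector $v_{\eta}$ of $\bbS_{\eta}$, realized inside $\Sym_{\eta}V$ via the theorem of Section~\ref{subsec:Rep}. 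Pieri inclusions are built from the comultiplication of $\Sym V$, and coassociativity (the symmetric bialgebra analog of Figure~\ref{fig:coass}) implies that splitting off $a+b$ boxes in a single step agrees with splitting off $a$ and then $b$, followed by the remultiplication $m$. Since we work in characteristic zero, every multinomial coefficient produced by these comultiplications is nonzero, so the image of $v_{\eta}$ under the three-step composition is a nonzero scalar multiple of its image under the direct Pieri inclusion $p \colon \bbS_{\eta} \into \Sym^{a+b}V \tnsr \bbS_{\nu}$, and is therefore itself nonzero.

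The main obstacle will be tracking the coefficients through the projection $\Sym_{\nu}V \onto \bbS_{\nu}$ built into $p_2$, since naively the projection onto the $\bbS_{\nu}$-isotypic component could kill the vector. A clean way to bypass this is to realize $\bbS_{\nu}$ inside $\ExtAlg^{\tilde{\nu}}V$ instead: all maps then factor through suitable tensor products of exterior powers of $V$, and the nonvanishing reduces directly to the coassociativity diagram of Figure~\ref{fig:coass} together with the fact that the Pieri inclusion $\bbS_{\eta} \into \ExtAlg^{\tilde{\eta}}V$ is nonzero by construction.
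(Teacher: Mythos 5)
The paper does not actually prove this statement; it is quoted as Lemma~2.1 of \cite{SamWey}, so your attempt can only be judged on its own merits. Your first step is fine: by degree the composite lands in $\Sym^{|\eta|-|\nu|}V \tnsr \bbS_{\nu}$, where $\bbS_{\eta}$ has multiplicity one by Pieri, so by Schur's lemma everything reduces to showing one scalar is nonzero. The gap is in the second step. Coassociativity of the (co)multiplication identifies ``split off $a+b$ boxes at once'' with ``split off $a$, then $b$, then remultiply'' only for the raw comultiplication components $\Phi$. But your composite is forced to pass through the \emph{irreducible} $\bbS_{\mu}$ in the middle: $p_1$ ends with the projection $\ExtAlg^{\tilde{\mu}}V \onto \bbS_{\mu}$ and $p_2$ begins with the inclusion $\bbS_{\mu} \into \ExtAlg^{\tilde{\mu}}V$, so the composite contains the isotypic projector $e_{\mu}\colon \ExtAlg^{\tilde{\mu}}V \to \ExtAlg^{\tilde{\mu}}V$. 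That projector is not a map in the coassociativity diagram of Figure~\ref{fig:coass}, and the actual composite differs from the projector-free one by the images of all the isotypic components $\bbS_{\rho}$, $\rho \neq \mu$, of $\ExtAlg^{\tilde{\mu}}V$ — many of which do occur in $\Sym^{b}V \tnsr \bbS_{\nu}$ and are not controlled by your argument. You correctly sense that ``a projection could kill the vector,'' but you locate the danger at the final projection onto $\bbS_{\nu}$ (which is harmless: the direct Pieri inclusion ends with the same projection, so it cancels out of the comparison) rather than at this intermediate projection onto $\bbS_{\mu}$, which is the actual crux of the lemma; realizing everything inside exterior powers does not remove it.

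A second, related weakness: ``every multinomial coefficient is nonzero, so there is no cancellation'' is not a valid inference here, because the boxes are split off by \emph{exterior} comultiplication and recombined by \emph{symmetric} multiplication $m$. The Koszul differential shows that such split-then-remultiply composites can vanish identically by sign cancellation even though every coefficient is $\pm 1$. What rules out cancellation under the hypotheses of the lemma is that $\tilde{\eta}-\tilde{\nu}$ is a $0$--$1$ vector (since $\eta-\nu$ is a horizontal strip), hence the columns supporting $\eta-\mu$ and those supporting $\mu-\nu$ are disjoint, so no two of the split-off vectors come from the same column and get antisymmetrized against each other. This disjointness is exactly where the hypothesis must enter the nonvanishing argument, and your proposal never invokes it outside the Schur's lemma step. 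To complete the proof you would need either an explicit evaluation on a highest weight vector that tracks the image through $e_{\mu}$ (showing the $\bbS_{\mu}$-component of $\Phi(\tilde{\eta},c')(v_{\eta})$ still maps to something nonzero), or a different mechanism, such as the explicit Olver-type formulas for Pieri inclusions used by Sam and Weyman.
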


\section{Category of Equivariant Modules Over $\GL(V)$}
\label{sec:CatMod}

Set $G:=\GL(V)$, $R:=\Sym V$. Then $G$ acts on $R$ by ring automorphisms in the usual way. Let $A:=R \rtimes G$ denote the twisted group algebra. The multiplication is given by the rule $(r_1, g_1)(r_2, g_2)=(r_1g_1(r_2), g_1g_2)$. The grading on $A$ comes from the usual grading of $R$.

Let $N \in A-\fmod$ be a graded left $A$-module all whose components are finite dimensional vector spaces, then it is also a graded $R$-module via the inclusion $R \into A$ and a representation of $G$, moreover the multiplication map $\mu \colon R \tnsr N \to N$ is a map of representations. Conversely, a representation $N$ of $G$ with graded $R-$module structure such that the multiplication map $R \tnsr N \to M$ is a map of representations defines a graded module over the twisted group algebra.

\begin{definition}
We will call such a finitely generated module over $A$ an equivariant module.
\end{definition}

We will display the structure of  an equivariant module $N$ by drawing a lattice. The vertices of the lattice are the representations that occur in $N$. An arrow going from a representation $\bbS_{\eta}$ to a representation $\bbS_{\nu}$ means that $\bbS_{\nu} $ lies in the image of the map $V \tnsr \bbS_{\eta} \into V \tnsr N \to N$.

\begin{example}
Consider the equivariant module $R \tnsr \ExtAlg^3 V$. It's lattice is the following:

\begin{figure}[h!]

\begin{tikzpicture}[scale=1.4]

\node at (0,0) {{\scriptsize\young(\hfil,\hfil,\hfil)}};
\node at (0,1) {{\scriptsize\young(\hfil \hfil,\hfil,\hfil)}};
\node at (0,2) {{\scriptsize\young(\hfil \hfil \hfil,\hfil,\hfil)}};
\node at (0,3) {{\scriptsize\young(\hfil \hfil \hfil \hfil,\hfil,\hfil)}};
\node at (0,4) {{\scriptsize\young(\hfil \hfil \hfil \hfil \hfil,\hfil,\hfil)}};
\node at (0,5) {$\vdots$};

\node at (2,1) {{\scriptsize\young(\hfil,\hfil,\hfil,\hfil)}};
\node at (2,2) {{\scriptsize\young(\hfil \hfil,\hfil,\hfil,\hfil)}};
\node at (2,3) {{\scriptsize\young(\hfil \hfil \hfil,\hfil,\hfil,\hfil)}};
\node at (2,4) {{\scriptsize\young(\hfil \hfil \hfil \hfil,\hfil,\hfil,\hfil)}};
\node at (2,5) {$\vdots$};

\path[->,font=\scriptsize] 

(0,0) edge[shorten <= .5cm, shorten >= .5cm] (0,1)
(0,1) edge[shorten <= .5cm, shorten >= .5cm] (0,2) 
(0,2) edge[shorten <= .5cm, shorten >= .5cm] (0,3) 
(0,3) edge[shorten <= .5cm, shorten >= .5cm] (0,4) 
(0,4) edge[shorten <= .5cm, shorten >= .5cm] (0,5)

(0,0) edge[shorten <= .5cm, shorten >= .5cm] (2,1) 
(0,1) edge[shorten <= .5cm, shorten >= .5cm] (2,2) 
(0,2) edge[shorten <= .5cm, shorten >= .5cm] (2,3) 
(0,3) edge[shorten <= .5cm, shorten >= .5cm] (2,4) 
(0,4) edge[shorten <= .5cm, shorten >= .5cm] (2,5)

(2,1) edge[shorten <= .5cm, shorten >= .5cm] (2,2)
(2,2) edge[shorten <= .5cm, shorten >= .5cm] (2,3) 
(2,3) edge[shorten <= .5cm, shorten >= .5cm] (2,4) 
(2,4) edge[shorten <= .5cm, shorten >= .5cm] (2,5);

\end{tikzpicture}

\caption{$R \tnsr \ExtAlg^3V$}

\end{figure}
\FloatBarrier

We see that whenever there is a posibility of an arrow we do have an arrow. This follows from the Lemma ~\ref{lem:Sam}.

\end{example}

\begin{example}

In fact all the modules of the form $R \tnsr \bbS_{\lambda}$ have this kind of lattice (see next section for precise statement). Here is another example:

\begin{figure}[h!]

\begin{tikzpicture}[scale=1.4]

\node at (0,0) {{\scriptsize\young(\hfil \hfil)}};
\node at (0,1) {{\scriptsize\young(\hfil \hfil \hfil)}};
\node at (0,2) {{\scriptsize\young(\hfil \hfil \hfil \hfil)}};
\node at (0,3) {{\scriptsize\young(\hfil \hfil \hfil \hfil \hfil)}};
\node at (0,4) {{\scriptsize\young(\hfil \hfil \hfil \hfil \hfil \hfil)}};
\node at (0,5) {$\vdots$};

\node at (2,1) {{\scriptsize\young(\hfil \hfil,\hfil)}};
\node at (2,2) {{\scriptsize\young(\hfil \hfil \hfil,\hfil)}};
\node at (2,3) {{\scriptsize\young(\hfil \hfil \hfil \hfil,\hfil)}};
\node at (2,4) {{\scriptsize\young(\hfil \hfil \hfil \hfil \hfil,\hfil)}};
\node at (2,5) {$\vdots$};

\node at (4,2) {{\scriptsize\young(\hfil \hfil,\hfil \hfil)}};
\node at (4,3) {{\scriptsize\young(\hfil \hfil \hfil,\hfil \hfil)}};
\node at (4,4) {{\scriptsize\young(\hfil \hfil \hfil \hfil,\hfil \hfil)}};
\node at (4,5) {$\vdots$};

\path[->,font=\scriptsize] 

(0,0) edge[shorten <= .5cm, shorten >= .5cm] (0,1)
(0,1) edge[shorten <= .5cm, shorten >= .5cm] (0,2) 
(0,2) edge[shorten <= .5cm, shorten >= .5cm] (0,3) 
(0,3) edge[shorten <= .5cm, shorten >= .5cm] (0,4) 
(0,4) edge[shorten <= .5cm, shorten >= .5cm] (0,5)

(0,0) edge[shorten <= .5cm, shorten >= .5cm] (2,1)
(0,1) edge[shorten <= .5cm, shorten >= .5cm] (2,2) 
(0,2) edge[shorten <= .5cm, shorten >= .5cm] (2,3) 
(0,3) edge[shorten <= .5cm, shorten >= .5cm] (2,4) 
(0,4) edge[shorten <= .5cm, shorten >= .5cm] (2,5)

(2,1) edge[shorten <= .5cm, shorten >= .5cm] (2,2)
(2,2) edge[shorten <= .5cm, shorten >= .5cm] (2,3) 
(2,3) edge[shorten <= .5cm, shorten >= .5cm] (2,4) 
(2,4) edge[shorten <= .5cm, shorten >= .5cm] (2,5)

(2,1) edge[shorten <= .5cm, shorten >= .5cm] (4,2)
(2,2) edge[shorten <= .5cm, shorten >= .5cm] (4,3) 
(2,3) edge[shorten <= .5cm, shorten >= .5cm] (4,4) 
(2,4) edge[shorten <= .5cm, shorten >= .5cm] (4,5) 

(4,2) edge[shorten <= .5cm, shorten >= .5cm] (4,3)
(4,3) edge[shorten <= .5cm, shorten >= .5cm] (4,4) 
(4,4) edge[shorten <= .5cm, shorten >= .5cm] (4,5); 

\end{tikzpicture}

\caption{$R \tnsr \Sym_2 V$}

\end{figure}
\FloatBarrier

\end{example}

\section{Projectives}
\label{sec:Proj}

We are working in the category of graded equivariant modules. In each degree such a module $N$ is just a finite-dimensional representation of $G$, so in each degree it decomposes into irreducible representations (see Section ~\ref{subsec:Rep}). Let $(\_)^G \niso \Hom_G(\Bbbk, \_) \colon A-\fmod \to \Bbbk-\fmod$ denote the functor which takes each $A$-module to the (graded!) vector space of its $G$-invariants. The functor $N \mapsto N^G$ picks out the copies of trivial representation $\Bbbk$ from $N$ in each degree, thus it is exact.

The morphisms in $A-\fmod$ are also easy to describe:

\[
\Hom_A(M,N) \niso \Hom_R(M,N)^G
\]

Also, $P \in A-\fmod$ is projective if and only if $P$ is projective as an $R$-module, i.e. is free $R$-module. Let $\lambda$ be a partition and $\bbS_{\lambda}$ the corresponding irreducible representation of $G$. Then the $A$-module $P_{\lambda}:=R \underset{\Bbbk}{\tnsr} \bbS_{\lambda}$ is projective and indecomposable because its ring of $A$-endomorphisms is just $\Bbbk$. Moreover, for any $A$-module $N$ there is a surjection $\underset{\lambda \in I}{\Drsum} P_{\lambda} \onto N$ where $\lambda$ varies over some finite set $I$. For example, this set can be taken to be the set of all representations in $N$ which contain generators.

The structure of $P_{\lambda}$ is quite easy to understand, thanks to Lemma ~\ref{lem:Sam}. Let us pick some representation $\bbS_{\eta} \into P_{\lambda}$ which occurs in $P_{\lambda}$. We would like to know what is the next-to-lowest degree of the submodule generated by $\bbS_{\eta}$ in $P_{\lambda}$. In other words we would like to know what is the image of the composition $V \tnsr \bbS_{\eta} \into V \tnsr P_{\lambda} \to P_{\lambda}$ where the first map is the given inclusion $\bbS_{\eta} \into P_{\lambda}$ tensored with $V$ and the second map is the multiplication on $P_{\lambda}$. The answer is that it consists of all the representations in $P_{\lambda}$ which can be obtained from $\bbS_{\eta}	$ by the Pieri rule.

\section{Elementary Equivariant Module Associated with a Partition $\lambda$.}
\label{sec:Elem}

Let $M=M_{\lambda}$ be the (graded) $R$- module defined in the following way. As a vector space $M= \underset{\mu \in R_{\lambda}} {\Drsum} \bbS_{\mu}$ where $R_{\lambda}:= \{\mu | \text{ $\mu$ is obtained from $\lambda$ by adding boxes to the first row}\}$. The multiplication map just adds another box to the first row of each $\mu$. Let us describe the multiplication map more rigorously: first we want to define the map of vector spaces $V \underset{\Bbbk}{\tnsr} M_{\lambda} \to M_{\lambda}$. When restricted to the summand $V \tnsr \bbS_{\mu}$ of $V \tnsr M_{\lambda}$ we define it to be the projection $V \tnsr \bbS_{\mu} \to \bbS_{\mu+(1,0, \cdots)}$ where $\mu+(1,0, \cdots)$ is the partition obtained from $\mu$ by adding one box to the first row. Now it is clear that applying the map twice $V \tnsr (V \tnsr M_{\lambda}) \to V \tnsr M_{\lambda} \to M_{\lambda}$ factors through $\Sym_2 V \tnsr M_{\lambda} \to M_{\lambda}$, so the multiplication map so defined indeed gives $M_{\lambda}$ the structure of an equivariant module over $R$. 

There is also a less direct way to see that this indeed defines an $R$-module structure. In Section~\ref{sec:Filt} we will see that each $M_{\lambda}$ can be realized as a submodule of $P_{\underline{\lambda}}$ where $\underline{\lambda}$ is obtained from $\lambda$ be removing a box from each non-empty column. Also see Subsection~\ref{subsec:Geom}.

\begin{definition}
We will call the module $M_{\lambda}$ defined above the elementary equivariant module associated with partition $\lambda$.
\end{definition}

It follows from the definition that $M_{\lambda}$ is generated in its lowest degree and in each degree it has one representation. 

\begin{example}
Consider the module of K\"{a}hler differentials. It is often defined as the kernel of the canonical map $R \tnsr V \to R$. The lattice of $R \tnsr V$ is shown below:

\begin{figure}[h!]

\begin{tikzpicture}[scale=1.4]

\node at (0,0) {{\scriptsize\young(\hfil)}};
\node at (0,1) {{\scriptsize\young(\hfil \hfil)}};
\node at (0,2) {{\scriptsize\young(\hfil \hfil \hfil)}};
\node at (0,3) {{\scriptsize\young(\hfil \hfil \hfil \hfil)}};
\node at (0,4) {{\scriptsize\young(\hfil \hfil \hfil \hfil \hfil)}};
\node at (0,5) {$\vdots$};

\node at (2,1) {{\scriptsize\young(\hfil,\hfil)}};
\node at (2,2) {{\scriptsize\young(\hfil \hfil,\hfil)}};
\node at (2,3) {{\scriptsize\young(\hfil \hfil \hfil,\hfil)}};
\node at (2,4) {{\scriptsize\young(\hfil \hfil \hfil \hfil,\hfil)}};
\node at (2,5) {$\vdots$};

\path[->,font=\scriptsize] 

(0,0) edge[shorten <= .5cm, shorten >= .5cm] (0,1)
(0,1) edge[shorten <= .5cm, shorten >= .5cm] (0,2) 
(0,2) edge[shorten <= .5cm, shorten >= .5cm] (0,3) 
(0,3) edge[shorten <= .5cm, shorten >= .5cm] (0,4) 
(0,4) edge[shorten <= .5cm, shorten >= .5cm] (0,5)

(0,0) edge[shorten <= .5cm, shorten >= .5cm] (2,1) 
(0,1) edge[shorten <= .5cm, shorten >= .5cm] (2,2) 
(0,2) edge[shorten <= .5cm, shorten >= .5cm] (2,3) 
(0,3) edge[shorten <= .5cm, shorten >= .5cm] (2,4) 
(0,4) edge[shorten <= .5cm, shorten >= .5cm] (2,5)

(2,1) edge[shorten <= .5cm, shorten >= .5cm] (2,2)
(2,2) edge[shorten <= .5cm, shorten >= .5cm] (2,3) 
(2,3) edge[shorten <= .5cm, shorten >= .5cm] (2,4) 
(2,4) edge[shorten <= .5cm, shorten >= .5cm] (2,5);

\end{tikzpicture}

\caption{$R \tnsr V$}

\end{figure}
\FloatBarrier

Clearly, the kernel of the canonical map $R \tnsr V \to R$ consists of all the representations in the right column, thus the module of K\"{a}hler differentials is just $M_{(1,1,0, \cdots)}$.
\end{example}

\begin{definition}

From the lattice of $M_{\lambda}$ it is clear that the only equivariant submodules of $M_{\lambda}$ are isomorphic to $M_{\lambda+(l,0, \cdots)}$. We will denote the  $l$-th power of the maximal ideal generated by  $V$ in $R$ by $V^l$ and so the submodule generated by $\bbS_{\lambda+(l,0,\cdots)}$ in $M_{\lambda}$ will be denoted by $V^lM_{\lambda}$. This also means that the only equivariant quotients of $M_{\lambda}$ are $M_{\lambda}/V^lM_{\lambda}$. We will call such module a $l$-truncation of $M_{\lambda}$.

\end{definition}

The importance of the modules $M_{\lambda}$ and their truncations is explained in Theorem~\ref{thm:filt} below.

Now we consider an interesting phenomenon to which we will refer to as "splicing". Let us illustrate it with an example:

\begin{example}
\label{ex:spl}

Take two elementary modules, say $M_{(2,1)}$ and $M_{(3,1)}$. Note that $M_{(3,1)}$ can be imbedded as a submodule of $M_{(2,1)}$. Let us draw the lattice of the direct sum $M_{(3,1)} \drsum M_{(2,1)}$:

\begin{figure}[h!]

\begin{tikzpicture}[scale=1.4]

\node at (0,1) {{\scriptsize\young(\hfil \hfil \hfil,\hfil)}};
\node at (0,2) {{\scriptsize\young(\hfil \hfil \hfil \hfil,\hfil)}};
\node at (0,3) {{\scriptsize\young(\hfil \hfil \hfil \hfil \hfil,\hfil)}};
\node at (0,4) {{\scriptsize\young(\hfil \hfil \hfil \hfil \hfil \hfil,\hfil)}};
\node at (0,5) {$\vdots$};

\node at (-2,0) {{\scriptsize\young(\hfil \hfil,\hfil)}};
\node at (-2,1) {{\scriptsize\young(\hfil \hfil \hfil,\hfil)}};
\node at (-2,2) {{\scriptsize\young(\hfil \hfil \hfil \hfil,\hfil)}};
\node at (-2,3) {{\scriptsize\young(\hfil \hfil \hfil \hfil \hfil,\hfil)}};
\node at (-2,4) {{\scriptsize\young(\hfil \hfil \hfil \hfil \hfil \hfil,\hfil)}};
\node at (-2,5) {$\vdots$};

\path[->,font=\scriptsize] 

(0,1) edge[shorten <= .5cm, shorten >= .5cm] (0,2)
(0,2) edge[shorten <= .5cm, shorten >= .5cm] (0,3) 
(0,3) edge[shorten <= .5cm, shorten >= .5cm] (0,4) 
(0,4) edge[shorten <= .5cm, shorten >= .5cm] (0,5) 

(-2,0) edge[shorten <= .5cm, shorten >= .5cm] (-2,1) 
(-2,1) edge[shorten <= .5cm, shorten >= .5cm] (-2,2) 
(-2,2) edge[shorten <= .5cm, shorten >= .5cm] (-2,3) 
(-2,3) edge[shorten <= .5cm, shorten >= .5cm] (-2,4) 
(-2,4) edge[shorten <= .5cm, shorten >= .5cm] (-2,5);

\end{tikzpicture}

\caption{$M_{(3,1)} \drsum M_{(2,1)}$}

\end{figure}
\FloatBarrier

Now let us embed $\bbS_{(5,1)}$ into $M_{(3,1)} \drsum M_{(2,1)}$ by diagonal embedding: we define the map $\iota \colon \bbS_{(5,1)} \to \bbS_{(5,1)} \drsum \bbS_{(5,1)}$ to be identity on both components. Now the submodule generated by the image $\iota$ is clearly isomorphic to $M_{(5,1)}$, but in each degree this submodule embeds into the direct sum diagonally. Now let us take the quotient. We get a an interesting equivariant module! (it's lattice is on the right) 

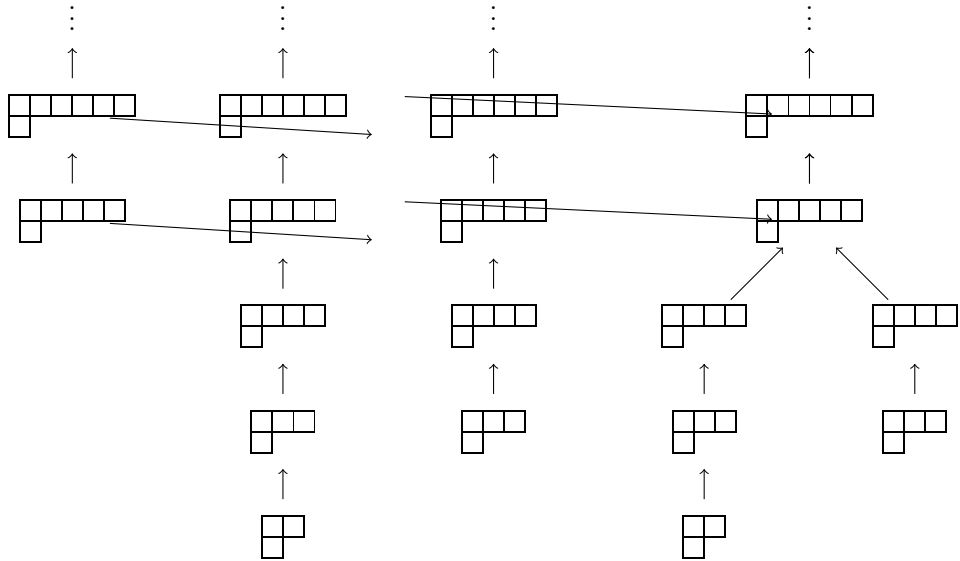
\begin{figure}[h!]

\begin{tikzpicture}[scale=1.4]

\node at (0,1) {{\scriptsize\young(\hfil \hfil \hfil,\hfil)}};
\node at (0,2) {{\scriptsize\young(\hfil \hfil \hfil \hfil,\hfil)}};
\node at (-1,3) {{\scriptsize\young(\hfil \hfil \hfil \hfil \hfil,\hfil)}};
\node at (-1,4) {{\scriptsize\young(\hfil \hfil \hfil \hfil \hfil \hfil,\hfil)}};
\node at (-1,5) {$\vdots$};

\node at (-2,0) {{\scriptsize\young(\hfil \hfil,\hfil)}};
\node at (-2,1) {{\scriptsize\young(\hfil \hfil \hfil,\hfil)}};
\node at (-2,2) {{\scriptsize\young(\hfil \hfil \hfil \hfil,\hfil)}};

\node at (-4,1) {{\scriptsize\young(\hfil \hfil \hfil,\hfil)}};
\node at (-4,2) {{\scriptsize\young(\hfil \hfil \hfil \hfil,\hfil)}};
\node at (-4,3) {{\scriptsize\young(\hfil \hfil \hfil \hfil \hfil,\hfil)}};
\node at (-4,4) {{\scriptsize\young(\hfil \hfil \hfil \hfil \hfil \hfil,\hfil)}};
\node at (-4,5) {$\vdots$};

\node at (-6,0) {{\scriptsize\young(\hfil \hfil,\hfil)}};
\node at (-6,1) {{\scriptsize\young(\hfil \hfil \hfil,\hfil)}};
\node at (-6,2) {{\scriptsize\young(\hfil \hfil \hfil \hfil,\hfil)}};
\node at (-6,3) {{\scriptsize\young(\hfil \hfil \hfil \hfil \hfil,\hfil)}};
\node at (-6,4) {{\scriptsize\young(\hfil \hfil \hfil \hfil \hfil \hfil,\hfil)}};
\node at (-6,5) {$\vdots$};

\node at (-8,3) {{\scriptsize\young(\hfil \hfil \hfil \hfil \hfil,\hfil)}};
\node at (-8,4) {{\scriptsize\young(\hfil \hfil \hfil \hfil \hfil \hfil,\hfil)}};
\node at (-8,5) {$\vdots$};

\path[->,font=\scriptsize] 

(0,1) edge[shorten <= .5cm, shorten >= .5cm] (0,2)
(0,2) edge[shorten <= .5cm, shorten >= .5cm] (-1,3) 
(-1,3) edge[shorten <= .5cm, shorten >= .5cm] (-1,4)
(-1,4) edge[shorten <= .5cm, shorten >= .5cm] (-1,5)

(-2,0) edge[shorten <= .5cm, shorten >= .5cm] (-2,1) 
(-2,1) edge[shorten <= .5cm, shorten >= .5cm] (-2,2) 
(-2,2) edge[shorten <= .5cm, shorten >= .5cm] (-1,3) 
(-1,3) edge[shorten <= .5cm, shorten >= .5cm] (-1,4) 
(-1,4) edge[shorten <= .5cm, shorten >= .5cm] (-1,5)

(-4,1) edge[shorten <= .5cm, shorten >= .5cm] (-4,2)
(-4,2) edge[shorten <= .5cm, shorten >= .5cm] (-4,3) 
(-4,3) edge[shorten <= .5cm, shorten >= .5cm] (-4,4) 
(-4,4) edge[shorten <= .5cm, shorten >= .5cm] (-4,5) 

(-6,0) edge[shorten <= .5cm, shorten >= .5cm] (-6,1) 
(-6,1) edge[shorten <= .5cm, shorten >= .5cm] (-6,2) 
(-6,2) edge[shorten <= .5cm, shorten >= .5cm] (-6,3) 
(-6,3) edge[shorten <= .5cm, shorten >= .5cm] (-6,4) 
(-6,4) edge[shorten <= .5cm, shorten >= .5cm] (-6,5)

(-8,3) edge[shorten <= .5cm, shorten >= .5cm] (-8,4)
(-8,4) edge[shorten <= .5cm, shorten >= .5cm] (-8,5)

(-8,3) edge[shorten <= .5cm, shorten >= .5cm](-4.8,2.8)
(-5.2,3.2) edge[shorten <= .5cm, shorten >= .5cm] (-1,3)

(-8,4) edge[shorten <= .5cm, shorten >= .5cm](-4.8,3.8)
(-5.2,4.2) edge[shorten <= .5cm, shorten >= .5cm] (-1,4);

\end{tikzpicture}

\caption{The module on the right is obtained by splicing $M_{(3,1)}$ with $M_{(2,1)}$ in degree $6$}

\end{figure}
\FloatBarrier
We will say that the module on the right is obtained by splicing $M_{(3,1)}$ with $M_{(2,1)}$ in degree $6$. One can now see how more than two $M_{\lambda}$'s could be spliced. 
\end{example}

\begin{example}
Suppose now we have a submodule $K \into \underset{\lambda \in J}{\Drsum}M_{\lambda}$ and we want to take the quotient. $K$ is generated by a finite number of representations $\bbS_{\lambda_1}, \bbS_{\lambda_2}, \cdots \bbS_{\lambda_m}$ each of which occurs in $\underset{\lambda \in J}{\Drsum}M_{\lambda}$. Let us order $\bbS_{\lambda_1}, \bbS_{\lambda_2}, \cdots \bbS_{\lambda_m}$ by the degree (which is the number of boxes), say $\bbS_{\lambda_1} \cdots \bbS_{\lambda_p}$ are all the generators of $K$ which have lowest degree. We will take the quotient by first dividing out $<\bbS_{\lambda_1}>$ - the submodule generated by $\bbS_{\lambda_1}$ in $\underset{\lambda \in J_k}{\Drsum}M_{\lambda}$, then dividing out the submodule generated by $\bbS_{\lambda_2}$ in $\underset{\lambda \in J_k}{\Drsum}M_{\lambda}/<\bbS_{\lambda_1}>$ and so on.

The lattice of $\underset{\lambda \in J_k}{\Drsum}M_{\lambda}$ looks like several vertical branches, each starting in some degree. Looking back at Example~\ref{ex:spl} we see that dividing out by $<\bbS_{\lambda_1}>$ has the effect that some branches are spliced together at some degree and possibly truncated to get the lattice of $\underset{\lambda \in J_k}{\Drsum}M_{\lambda}/<\bbS_{\lambda_1}>$. Now dividing out by $<\bbS_{\lambda_2}>$ just splices or truncates more branches at the same degree, so once we are done with all of $\bbS_{\lambda_1} \cdots \bbS_{\lambda_p}$ the lattice has some branches spliced or truncated at the same degree. Similar things will happen when we go on with generators in the higher degrees.

\end{example}

\section{Filtration of an Equivariant Module}
\label{sec:Filt}

We will need the following definitions.

\begin{definition}
Let $\lambda$ be a partition of $d$ and let $t$ be the number of columns of $\lambda$. The saturation of $\lambda$ denoted by $\overline{\lambda}$ is the partition of $d+t$ obtained from $\lambda$ by putting one box in each column.

For $0 \leq i \leq t$ and define $S(\lambda, i)$ to be the set of partitions $\beta$ (of $d+t-i$) such that:

\begin{enumerate}
\item The skew-shape $\beta-\lambda$ has at most one box in each column, and

\item The skew shape $\beta-\lambda$ needs $i$ boxes to be added to it to get the skew shape $\overline{\lambda}-\lambda$. Note that $S(\lambda, t)=\{\lambda\}$ and $S(\lambda, 0)=\{\overline{\lambda}\}$
\end{enumerate}

Define $S(\lambda, i)$ to be the empty set if $i$ is not in the range $0 \leq i \leq t$.

\end{definition}

\begin{example}
Let $\lambda={\scriptsize\young(\hfil \hfil,\hfil)}$ Then $\overline{\lambda}={\scriptsize\young(\hfil \hfil,\hfil \hfil,\hfil)}$, so $S(\lambda,0)=\{ {\scriptsize\young(\hfil \hfil,\hfil \hfil,\hfil)} \}$, $S(\lambda,1)=\{ {\scriptsize\young(\hfil \hfil,\hfil \hfil)}, {\scriptsize\young(\hfil \hfil,\hfil,\hfil)} \}$ and $S(\lambda, 2)=\{ {\scriptsize\young(\hfil \hfil,\hfil)} \}$.	
\end{example}

Now consider the module $P_{\lambda}=R \underset{\Bbbk}{\tnsr} \bbS_{\lambda}$. The submodule $<\bbS_{\overline{\lambda}}> $ generated by $\bbS_{\overline{\lambda}}$ in $P_{\lambda}$ is isomorphic to $M_{\overline{\lambda}}$ because by Pieri rule the only possibility to add new boxes to $\overline{\lambda}$ is to put them all in the first row, thus creating new columns. 

\begin{example}
Suppose dimension of $V$ is $3$ and consider $P_{(2,0,0)}:=R \tnsr \Sym_2 V$. The saturation of $\lambda=(2,0,0)$ is the partition $(2,2,0)$. The representation $\bbS_{(2,2,0)}$ has dimemsion $6$ with the basis consisting of

\[
{\scriptsize\young(11,22)}, {\scriptsize\young(11,23)}, {\scriptsize\young(11,33)}, {\scriptsize\young(12,23)}, {\scriptsize\young(12,33)}, {\scriptsize\young(22,33)}
\]

The images of these elements in $R \tnsr \Sym_2 V$ are:

\begin{flalign*}
& {\scriptsize\young(11,22)} \mapsto x_1^2 \tnsr x_2^2-x_1x_2 \tnsr x_2x_1-x_2x_1 \tnsr x_1x_2+x_2^2 \tnsr x_1^2  \\
&{\scriptsize\young(11,23)} \mapsto x_1^2 \tnsr x_2x_3-x_1x_3 \tnsr x_2x_1-x_2x_1 \tnsr x_1x_3+x_2x_3 \tnsr x_1^2 \\
&{\scriptsize\young(11,33)} \mapsto x_1^2 \tnsr x_3^2-x_1x_3 \tnsr x_3x_1-x_3x_1 \tnsr x_1x_3+x_3^2 \tnsr x_1^2  \\
&{\scriptsize\young(12,23)} \mapsto x_1x_2 \tnsr x_2x_3-x_1x_3 \tnsr x_2^2-x_2^2 \tnsr x_1x_3+x_2x_3 \tnsr x_1x_2 \\
&{\scriptsize\young(12,33)} \mapsto x_1x_2 \tnsr x_3^2-x_1x_3 \tnsr x_3x_2-x_3x_2 \tnsr x_1x_3+x_3^2 \tnsr x_1x_2 \\
&{\scriptsize\young(22,33)} \mapsto x_2^2 \tnsr x_3^2-x_2x_3 \tnsr x_3x_2-x_3x_2 \tnsr x_2x_3+x_3^2 \tnsr x_2^2 
\end{flalign*}

The submodule generated by these elements in $R \tnsr \Sym_2 V$ is isomorphic to $M_{(2,2,0)}$.
\end{example}

Consider the quotient $P_{\lambda}/<\bbS_{\overline{\lambda}}>$ and take some $\beta$ from $S(\lambda, 1)$, say the skew shape $\beta-\lambda$ needs one box to be added to column $j$ to get $\overline{\lambda}-\lambda$. Let us look at the submodule generated by such $\beta$ in $P_{\lambda}/<\bbS_{\overline{\lambda}}>$. It consists of representations which occur in $P_{\lambda}$ obtained from $\beta$ by adding new boxes by the Pieri rule. If one of the boxes gets added to the column $j$ (this makes $\overline{\lambda}$ from $\beta$) all the other boxes have to go to the first row and the representation obtained in this way lies in $<\bbS_{\overline{\lambda}}>$, so it is zero in $P_{\lambda}/<\bbS_{\overline{\lambda}}>$. Thus the submodule generated by such $\beta$ in $P_{\lambda}/<\bbS_{\overline{\lambda}}>$ is isomorphic to $M_{\beta}$.

\begin{example}
Let us look at the quotient $(R \tnsr \Sym_2 V)/<\bbS_{(2,2,0)}>$. Its lattice looks like so:

\begin{figure}[h!]

\begin{tikzpicture}[scale=1.4]

\node at (0,0) {{\scriptsize\young(\hfil \hfil)}};
\node at (0,1) {{\scriptsize\young(\hfil \hfil \hfil)}};
\node at (0,2) {{\scriptsize\young(\hfil \hfil \hfil \hfil)}};
\node at (0,3) {{\scriptsize\young(\hfil \hfil \hfil \hfil \hfil)}};
\node at (0,4) {{\scriptsize\young(\hfil \hfil \hfil \hfil \hfil \hfil)}};
\node at (0,5) {$\vdots$};

\node at (2,1) {{\scriptsize\young(\hfil \hfil,\hfil)}};
\node at (2,2) {{\scriptsize\young(\hfil \hfil \hfil,\hfil)}};
\node at (2,3) {{\scriptsize\young(\hfil \hfil \hfil \hfil,\hfil)}};
\node at (2,4) {{\scriptsize\young(\hfil \hfil \hfil \hfil \hfil,\hfil)}};
\node at (2,5) {$\vdots$};

\path[->,font=\scriptsize] 

(0,0) edge[shorten <= .5cm, shorten >= .5cm] (0,1)
(0,1) edge[shorten <= .5cm, shorten >= .5cm] (0,2) 
(0,2) edge[shorten <= .5cm, shorten >= .5cm] (0,3) 
(0,3) edge[shorten <= .5cm, shorten >= .5cm] (0,4) 
(0,4) edge[shorten <= .5cm, shorten >= .5cm] (0,5)

(0,0) edge[shorten <= .5cm, shorten >= .5cm] (2,1)
(0,1) edge[shorten <= .5cm, shorten >= .5cm] (2,2) 
(0,2) edge[shorten <= .5cm, shorten >= .5cm] (2,3) 
(0,3) edge[shorten <= .5cm, shorten >= .5cm] (2,4) 
(0,4) edge[shorten <= .5cm, shorten >= .5cm] (2,5)

(2,1) edge[shorten <= .5cm, shorten >= .5cm] (2,2)
(2,2) edge[shorten <= .5cm, shorten >= .5cm] (2,3) 
(2,3) edge[shorten <= .5cm, shorten >= .5cm] (2,4) 
(2,4) edge[shorten <= .5cm, shorten >= .5cm] (2,5);

\end{tikzpicture}

\caption{$(R \tnsr \Sym_2 V)/<\bbS_{(2,2,0)}>$}

\end{figure}
\FloatBarrier

When dimension of $V$ is $3$ the represenation $\bbS_{(2,1,0)}$ has dimension $8$ and the standard basis is:

\[
{\scriptsize\young(11,2)}, {\scriptsize\young(11,3)}, {\scriptsize\young(12,2)}, {\scriptsize\young(12,3)}, {\scriptsize\young(13,2)}, {\scriptsize\young(13,3)}, {\scriptsize\young(22,3)}, {\scriptsize\young(23,3)}
\]

The images of these elements in $R \tnsr \Sym_2 V$ are:

\begin{flalign*}
&{\scriptsize\young(11,2)} \mapsto x_2 \tnsr x_1^2-x_1 \tnsr x_2x_1\\
&{\scriptsize\young(11,3)} \mapsto x_3 \tnsr x_1^2-x_1 \tnsr x_1x_3\\
&{\scriptsize\young(12,2)} \mapsto x_2 \tnsr x_1x_2-x_1 \tnsr x_2^2\\
&{\scriptsize\young(12,3)} \mapsto x_3 \tnsr x_1x_2-x_1 \tnsr x_2x_3 \\
&{\scriptsize\young(13,2)} \mapsto x_2 \tnsr x_1x_3-x_1 \tnsr x_2x_3\\
&{\scriptsize\young(13,3)} \mapsto x_3 \tnsr x_1x_3-x_1 \tnsr x_3^2\\
&{\scriptsize\young(22,3)} \mapsto x_3 \tnsr x_2x_3-x_2 \tnsr x_3^2\\
&{\scriptsize\young(23,3)} \mapsto x_3 \tnsr x_2x_3-x_2 \tnsr x_3^2\\
\end{flalign*}

Thus the images of these elements in the quotient $(R \tnsr \Sym_2 V)/<\bbS_{(2,2,0)}>$ generate a submodule isomorphic to $M_{(2,1,0)}$.
\end{example}

Now the proof of the following lemma should be clear:

\begin{lemma}
The module $P_{\lambda}$ has a filtration:

\[
F_{\lambda}^{\bullet} \colon 0 \into F_{\lambda}^{0} \into F_{\lambda}^{1} \into \cdots \into F_{\lambda}^{t-1} \into F_{\lambda}^{t}=P_{\lambda}
\]

with the associated graded $\underset{i \in \bbZ}{\Drsum}F_{\lambda}^{i}/F_{\lambda}^{i-1} \niso \underset{i \in \bbZ}{\Drsum} \underset{\beta \in S(\lambda, i)}{\drsum} M_{\beta}$

In fact, $F_{\lambda}^i = <\underset{\beta \in S(\lambda, i)}{\Drsum} \bbS_{\beta}>$ - the submodule generated by all representations $\bbS_{\beta}$ where $\beta \in S(\lambda, i)$.
\end{lemma}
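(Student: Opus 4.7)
My plan is to verify, for the explicit definition $F_\lambda^i := <\underset{\beta \in S(\lambda, i)}{\Drsum} \bbS_\beta>$, the three claims: (i) the $F_\lambda^i$ form an ascending chain, (ii) $F_\lambda^i/F_\lambda^{i-1} \niso \underset{\beta \in S(\lambda, i)}{\Drsum} M_\beta$, and (iii) $F_\lambda^t = P_\lambda$. The endpoints are built in: $S(\lambda, t) = \{\lambda\}$ gives $F_\lambda^t = <\bbS_\lambda> = P_\lambda$, and the discussion preceding the statement already identifies $F_\lambda^0 = <\bbS_{\overline{\lambda}}> \niso M_{\overline{\lambda}}$.

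\textbf{The chain.} For $F_\lambda^{i-1} \subseteq F_\lambda^i$, given $\beta' \in S(\lambda, i-1)$ I would remove one box of $\beta' - \lambda$ to obtain $\beta \in S(\lambda, i)$. Both $\beta - \lambda$ and $\beta' - \beta$ are horizontal strips, so Lemma~\ref{lem:Sam} (with $\nu = \lambda$, $\mu = \beta$, $\eta = \beta'$) shows the composition $\bbS_{\beta'} \into R \tnsr \bbS_\beta \to P_\lambda$ is nonzero. Hence $\bbS_{\beta'} \subseteq <\bbS_\beta> \subseteq F_\lambda^i$.

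\textbf{The associated graded.} For each $\beta \in S(\lambda, i)$, the representations $\bbS_\eta$ appearing in the submodule $<\bbS_\beta> \subset P_\lambda$ are exactly those with both $\eta - \beta$ and $\eta - \lambda$ horizontal strips (one direction is Pieri, the other again follows from Lemma~\ref{lem:Sam}). I would split these into two types: (a) those for which $\eta - \lambda$ equals $(\beta - \lambda)$ together with some boxes in row $1$ lying in new columns, equivalently $\eta = \beta + (l, 0, \ldots)$ for some $l \geq 0$; and (b) those whose $\eta - \lambda$ contains at least one box atop a column $j \leq t$ not already in $\beta - \lambda$. For type (b), one can choose $\beta'' \in S(\lambda, i-1)$ with $\lambda \subseteq \beta'' \subseteq \eta$ by adding one such column-$j$ box to $\beta - \lambda$; then $\eta - \beta''$ is again a horizontal strip and Lemma~\ref{lem:Sam} places $\bbS_\eta \subseteq <\bbS_{\beta''}> \subseteq F_\lambda^{i-1}$. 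Thus the image of $<\bbS_\beta>$ in $F_\lambda^i/F_\lambda^{i-1}$ is spanned by the type-(a) representations with multiplicity one each, and the induced $R$-action is exactly the one defining $M_\beta$ in Section~\ref{sec:Elem}. Since every $\beta \in S(\lambda, i)$ has first row of length $\lambda_1 = t$, distinct $\beta, \beta' \in S(\lambda, i)$ yield disjoint families $\{\beta + (l, 0, \ldots)\}$, so the sum over $S(\lambda, i)$ is direct; a degree count ($F_\lambda^{i-1}$ is generated in degree $|\lambda|+t-i+1$, while $\bbS_\beta$ sits in degree $|\lambda|+t-i$) ensures each image is nonzero in the quotient.

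\textbf{Main obstacle.} The only delicate point is the case analysis for type (b): showing that every $\eta$ in $<\bbS_\beta>$ lying outside $M_\beta$ actually belongs to $F_\lambda^{i-1}$. This reduces to producing the intermediate $\beta''$ and checking that $\eta - \beta''$ remains a horizontal strip, both of which follow immediately from the fact that $\eta - \lambda$ itself is a horizontal strip and therefore has at most one box per column.
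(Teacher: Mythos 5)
Your proposal is correct and is essentially the argument the paper intends: the paper works out the cases $S(\lambda,0)$ and $S(\lambda,1)$ explicitly (the submodule generated by $\bbS_{\overline{\lambda}}$ is $M_{\overline{\lambda}}$, and any constituent of $<\bbS_{\beta}>$ that fills in a missing column of $\overline{\lambda}-\lambda$ falls into the previous filtration step) and then declares the general case clear, whereas you carry out the induction for all $i$ via the type (a)/(b) split and Lemma~\ref{lem:Sam}. The only detail to add is in type (b): an arbitrary column-$j$ box of $(\eta-\lambda)\setminus(\beta-\lambda)$ need not make $\beta''$ a partition (its left neighbour might be a box of $\eta-\lambda$ missing from $\beta$), but taking $j$ minimal among the available columns fixes this, and likewise one should note that a type-(a) constituent $\bbS_{\beta+(l,0,\dots)}$ with $l>0$ cannot lie in $F_{\lambda}^{i-1}$ because it does not contain any $\beta''\in S(\lambda,i-1)$ (count boxes in columns $\leq t$).
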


Now let $N$ be any finitely generated equivariant module. Then there exists a surjection $\pi \colon \underset{\lambda \in I}{\Drsum} P_{\lambda} \onto N$  where $I$ is some finite set of partitions (possibly with some elements occuring more than once). Let $F_{\lambda}^{\bullet}$ to be the filtration of the summand $P_{\lambda}$ from the above lemma. Now define the filtration $F^{\bullet}$ of $\underset{\lambda \in I}{\Drsum} P_{\lambda}$ to be $F^i:=\underset{\lambda \in I}{\Drsum} F_{\lambda}^i$. Now push this filtration forward to $N$ to get a filtration $F_N^{\bullet}$: $F_N^i:= \im \pi|_{F_i}$. Let us illustrate this with a diagram:

\begin{figure}[h!]
\begin{tikzpicture}
\matrix(m) [matrix of math nodes, 
row sep=3.0em, column sep=3.0em, 
text height=1.5ex, text depth=0.25ex]
{0 & 0 &\\
F^i/F^{i-1} & F_N^i/F_N^{i-1} & \\
F^i & F_N^i & 0 \\
F^{i-1} & F_N^{i-1} & 0 \\
0 & 0 &\\};

\path[->,font=\scriptsize] 
(m-2-1) edge (m-2-2)
(m-3-1) edge node[above]{$\pi$}(m-3-2)
(m-3-2) edge (m-3-3)
(m-4-1) edge node[above]{$\pi$} (m-4-2) 
(m-4-2) edge (m-4-3)
(m-5-1) edge (m-4-1)
(m-4-1) edge (m-3-1)
(m-3-1) edge (m-2-1)
(m-2-1) edge (m-1-1)

(m-5-2) edge (m-4-2)
(m-4-2) edge (m-3-2)
(m-3-2) edge (m-2-2)
(m-2-2) edge (m-1-2);
\end{tikzpicture}
\end{figure}
\FloatBarrier

After chasing the diagram, we see that the induced map  $F^i/F^{i-1} \to F_N^i/F_N^{i-1}$ is a surjection. 
Now let us prove the following:

\begin{theorem}
\label{thm:filt}
Any finitely generated equivariant module $N$ has a filtration with the associated graded being the direct sum of modules of only two kinds: either $M_{\lambda}$ or truncations of $M_{\lambda}$.
\end{theorem}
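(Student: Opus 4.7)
Plan: The material just before the theorem already sets up the reduction. Starting from any surjection $\pi : P := \bigoplus_{\lambda \in I} P_\lambda \onto N$ (with $I$ a finite set of partitions indexing equivariant generators of $N$), and summing componentwise the filtrations $F^\bullet_\lambda$ provided by the preceding lemma, one obtains a filtration $F^\bullet$ of $P$ whose associated graded is a direct sum of elementary modules $M_\beta$. Pushing forward to $F^i_N := \pi(F^i)$ and invoking the diagram chase displayed in the excerpt, each $F^i_N/F^{i-1}_N$ is a quotient of a finite direct sum of $M_\beta$'s. Thus the theorem reduces to the following \emph{refinement claim}: every quotient of a finite direct sum $\bigoplus_{\beta \in J} M_\beta$ by a finitely generated submodule $K$ admits a filtration whose associated graded is a direct sum of modules of the form $M_\mu$ or $M_\mu / V^l M_\mu$. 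Interleaving such refined filtrations across the successive quotients of $F^\bullet_N$ yields the desired filtration of $N$.

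I would prove the refinement claim by induction on the number $r$ of equivariant generators of $K$. The base case $r=0$ is trivial. For the inductive step, pick an equivariant generator $\bbS_\nu$ of $K$ of least degree. Because each $M_\beta$ contains $\bbS_\nu$ with multiplicity at most one, and because Lemma~\ref{lem:Sam} guarantees that the Pieri maps between any two occurring representations are nonzero, the submodule $\langle \bbS_\nu \rangle \subset \bigoplus_\beta M_\beta$ is a \emph{diagonally embedded} copy of $M_\nu$, sitting inside precisely those summands $M_\beta$ on which the projection of $\bbS_\nu$ is nonzero. Fix one such summand $M_{\alpha_0}$ and set $k := |\nu| - |\alpha_0|$. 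The composition $\bigoplus_\beta M_\beta \to M_{\alpha_0} \to M_{\alpha_0}/V^k M_{\alpha_0}$ kills $\langle \bbS_\nu \rangle$ and so factors through the quotient, giving a short exact sequence
\[
0 \to Q' \to \Bigl( \bigoplus\nolimits_\beta M_\beta \Bigr)\big/\langle \bbS_\nu \rangle \to M_{\alpha_0}/V^k M_{\alpha_0} \to 0,
\]
where a direct kernel-cokernel computation (this is the splicing picture of Example~\ref{ex:spl}) identifies $Q'$ with a quotient of $\bigoplus_{\beta \neq \alpha_0} M_\beta$ by the image of the diagonal $M_\nu$ (a submodule generated by a single representation). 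Iterating this splitting on $Q'$, peeling off one summand at a time, exhibits $(\bigoplus_\beta M_\beta)/\langle \bbS_\nu \rangle$ as an iterated extension of untruncated $M_\beta$'s together with a single truncation $M_{\alpha_{\mathrm{last}}}/V^k M_{\alpha_{\mathrm{last}}}$, in particular producing a filtration of the required form. Applying the induction hypothesis to the image of $K$ inside this quotient, which is generated by the images of $\bbS_{\nu_2}, \ldots, \bbS_{\nu_r}$, completes the argument.

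The principal obstacle is the inductive step, and within it two technical points. The first is the diagonal description of $\langle \bbS_\nu \rangle$ in full generality: once one uses the one-representation-per-degree structure of each $M_\beta$ and Lemma~\ref{lem:Sam} to see that every Pieri component between adjacent degrees is an isomorphism of irreducibles, this is immediate. The second is that after the first iteration the ambient module is no longer a pure direct sum of $M_\beta$'s but also contains truncated pieces $M_\mu / V^l M_\mu$. These truncations share the same one-per-degree lattice structure below degree $|\mu| + l$, so the diagonal/peeling analysis (and the appeal to Lemma~\ref{lem:Sam}) transfers verbatim, and the induction is closed.
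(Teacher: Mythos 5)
Your reduction to a ``refinement claim'' about quotients of $\Drsum_{\beta}M_{\beta}$ is where the argument develops a genuine gap: the induction on the number of generators of $K$ does not close as stated. After you quotient by $\langle\bbS_{\nu_1}\rangle$, the ambient module is no longer a direct sum of $M_{\beta}$'s but a (generally nonsplit) extension of $M_{\alpha_0}/V^kM_{\alpha_0}$ by $\Drsum_{\beta\neq\alpha_0}M_{\beta}$, so the inductive hypothesis, which is stated only for quotients of direct sums, cannot be applied to the images of $\bbS_{\nu_2},\dots,\bbS_{\nu_r}$. Your claim that the analysis ``transfers verbatim'' is not justified: the peeling step relies on composing with the projection onto a single summand $M_{\alpha_0}$, and no such projection exists once the ambient module is a nonsplit iterated extension. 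To repair this you would have to strengthen the inductive statement to cover quotients of arbitrary iterated extensions of elementary modules and truncations. Separately, your identification of $Q'$ is off: since $c_{\alpha_0}\neq 0$, the diagonal $\langle\bbS_{\nu}\rangle\niso M_{\nu}$ projects isomorphically onto $V^kM_{\alpha_0}$ and is therefore the graph of a map $V^kM_{\alpha_0}\to\Drsum_{\beta\neq\alpha_0}M_{\beta}$; consequently $Q'\niso\Drsum_{\beta\neq\alpha_0}M_{\beta}$ on the nose, not a quotient of it by a diagonal $M_{\nu}$. (This actually simplifies that step, but it does not rescue the induction.)

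The paper avoids all of this by reversing the order of operations: instead of pushing the coarse filtration $F^{\bullet}$ forward and then trying to filter each quotient $F_N^i/F_N^{i-1}$ of a direct sum, it first \emph{refines} $F^{\bullet}$ on $\Drsum_{\lambda\in I}P_{\lambda}$ so that each graded piece is a \emph{single} $M_{\beta}$ (possible because each $F^i/F^{i-1}$ is already exhibited as a direct sum of $M_{\beta}$'s, which one filters by partial sums, and such a filtration lifts), and only then pushes forward along $\pi$. Each graded piece of the image filtration is then a quotient of one $M_{\beta}$, and by the classification in Section~\ref{sec:Elem} the only equivariant quotients of $M_{\beta}$ are $M_{\beta}$ itself, its truncations $M_{\beta}/V^lM_{\beta}$, and zero. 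This makes the theorem a three-line consequence of the lemma on $P_{\lambda}$ and renders the entire splicing analysis unnecessary for this proof.
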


\begin{proof}
Filter each $F^i/F^{i-1}$ so that the associated graded has just one $M_{\lambda}$ in each piece. Lift this to get a refinement of filtration $F^{\bullet}$ of $\underset{\lambda \in I}{\Drsum} P_{\lambda}$. Pushing this to $N$ gives the desired filtration. 

\end{proof}

\section{Resolution of $M_{\lambda}$}
\label{sec:Res}

Consider now the module $M_{\lambda} \tnsr \ExtAlg^k V$. It is generated in its lowest degree. Let us take the summand $\bbS_{\alpha}$ of $\bbS_{\mu} \tnsr \ExtAlg^k V$. We would like to know what is the image of $V \tnsr \bbS_{\alpha}$ under the multiplication. More precisely, let $\bbS_{\eta}$ be a summand of $V \tnsr \bbS_{\alpha}$. We would like to know the image of the following map:

\[
\bbS_{\eta} \into V \tnsr \bbS_{\alpha} \into V \tnsr (\bbS_{\mu} \tnsr \ExtAlg^kV) \xrightarrow{\niso} (V\tnsr \bbS_{\mu})\tnsr \ExtAlg^kV \to \bbS_{\mu+(1,0, \cdots)} \tnsr \ExtAlg^kV
\]

We know the labeled diagram of $\eta$ coming from the bracket placement $V \tnsr (\bbS_{\mu} \tnsr \ExtAlg^k V)$. By the definition of multiplication on $M_{\lambda}$, the image of $\bbS_{\eta}$ is equal to $0$ if $\eta$ is not equivalent to an embedding with "$V$" in the first row. Otherwise, the image is just $\bbS_{\eta}$.

\begin{example}
Let the dimension of $V$ be equal to $2$. Let us work out the lattice of $M_{(1,1)} \tnsr \ExtAlg^1 V$. The lowest degree is $3$ where we find just one representation {\scriptsize\young(\hfil \ExtAlg,\hfil)}. Now, 

\begin{eqnarray*}
{\scriptsize\young(\hfil \ExtAlg,\hfil)} \tnsr {\scriptsize\young(V)} \niso {\scriptsize\young(\hfil \ExtAlg V,\hfil)} \drsum {\scriptsize\young(\hfil \ExtAlg,\hfil V)}
\end{eqnarray*}

But both of these diagrams are equivalent to a diagram with "$V$" in the first row: {\scriptsize\young(\hfil \ExtAlg V,\hfil)} $\sim$ {\scriptsize\young(\hfil V\ExtAlg,\hfil)} and {\scriptsize\young(\hfil \ExtAlg,\hfil V)} $\sim$ {\scriptsize\young(\hfil V,\hfil \ExtAlg)}. Thus in degrees $3$ and $4$ the lattice looks like:

\begin{figure}[h!]

\begin{tikzpicture}[scale=1.4]

\node at (2,0) {{\scriptsize\young(\hfil \ExtAlg,\hfil)}};
\node at (0,1) {{\scriptsize\young(\hfil \hfil \ExtAlg,\hfil)}};
\node at (4,1) {{\scriptsize\young(\hfil \hfil,\hfil \ExtAlg)}};

\path[->,font=\scriptsize] 

(2,0) edge[shorten <= .5cm, shorten >= .5cm] (0,1)
(2,0) edge[shorten <= .5cm, shorten >= .5cm] (4,1); 
\end{tikzpicture}

\caption{Degrees $3$ and $4$ of $M_{(1,1)} \tnsr \ExtAlg^1V$}

\end{figure}
\FloatBarrier

Starting with degree $4$ things stabilize in the sence that each representation $\alpha$ generates a submodule isomorphic to $M_{\alpha}$:

\begin{figure}[h!]

\begin{tikzpicture}[scale=1.4]

\node at (2,0) {{\scriptsize\young(\hfil \ExtAlg,\hfil)}};
\node at (0,1) {{\scriptsize\young(\hfil \hfil \ExtAlg,\hfil)}};
\node at (4,1) {{\scriptsize\young(\hfil \hfil,\hfil \ExtAlg)}};
\node at (0,2) {{\scriptsize\young(\hfil \hfil \hfil \ExtAlg,\hfil)}};
\node at (4,2) {{\scriptsize\young(\hfil \hfil \hfil,\hfil \ExtAlg)}};
\node at (0,3) {$\vdots$};
\node at (4,3) {$\vdots$};

\path[->,font=\scriptsize] 

(2,0) edge[shorten <= .5cm, shorten >= .5cm] (0,1)
(2,0) edge[shorten <= .5cm, shorten >= .5cm] (4,1)
(0,1) edge[shorten <= .5cm, shorten >= .5cm] (0,2)
(0,2) edge[shorten <= .5cm, shorten >= .5cm] (0,3)
(4,1) edge[shorten <= .5cm, shorten >= .5cm] (4,2)
(4,2) edge[shorten <= .5cm, shorten >= .5cm] (4,3);
\end{tikzpicture}

\caption{$M_{(1,1)} \tnsr \ExtAlg^1V$}

\end{figure}
\FloatBarrier

\end{example}

In general, the structure of $M_{\lambda} \tnsr \ExtAlg^kV$ is described by the following 

\begin{lemma}
\label{lem:str}

Suppose first $(j+1), j \geq 1$ rows of $\lambda$ have the same length. Then all the summands $\bbS_{\alpha}$ of $\bbS_{\lambda} \tnsr \ExtAlg^kV$ which have at most $j$  and at least one "$\ExtAlg$"'s in the right-most column will generate two representations in the next degree. All the other summands will generate just one representation. Starting with degree $|\lambda|+k+1$ each representation $\nu$ generates a submodule isomorphic to $M_{\nu}$.

If the first row of $\lambda$ is strictly longer than the second one, then $M_{\lambda} \tnsr \ExtAlg^k V \niso \underset{\alpha \in VS(\lambda,k)}{\Drsum}M_{\alpha}$.
\end{lemma}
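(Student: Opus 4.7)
The plan is to count, for each summand $\bbS_\alpha$ of $\bbS_\lambda \tnsr \ExtAlg^k V$, the summands $\bbS_\eta$ of $V \tnsr \bbS_\alpha$ whose image in $\bbS_{\lambda + (1,0,\ldots)} \tnsr \ExtAlg^k V$ under the multiplication of $M_\lambda \tnsr \ExtAlg^k V$ is nonzero. By the paragraph preceding the lemma, the image is nonzero precisely when the labeled diagram $\eta$ in the bracket placement $V \tnsr (\bbS_\lambda \tnsr \ExtAlg^k V)$ is $\sim$-equivalent (via the relation of Section~\ref{sec:Emb}) to some labeled diagram in the placement $(V \tnsr \bbS_\lambda) \tnsr \ExtAlg^k V$ with the $V$ label in the first row. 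I would split the enumeration by the row $r$ of $\alpha$ receiving the $V$ box.

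When $r = 1$, the contribution is always one: either $\alpha_1 = \lambda_1$ and $V$ is already at $(1, \lambda_1 + 1)$, or $\alpha_1 = \lambda_1 + 1$ so that $\eta - \lambda$ has its doubly-occupied row at row $1$, giving a multiplicity-one $\bbS_\eta$ whose unique equivalent labeling in the other placement still has $V$ at $(1, \lambda_1 + 1)$. When $r > 1$ and row $r$ of $\alpha$ already carries a $\wedge$, $\eta - \lambda$ has its doubly-occupied row at row $r > 1$ and the forced (multiplicity-one) labeling in the other placement places $V$ at $(r, \lambda_r + 1)$---not in the first row---because moving $V$ to row $1$ would leave two $\wedge$'s in row $r$, violating the vertical-strip condition for the $\wedge$ labels; this case contributes zero. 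When $r > 1$ and row $r$ has no $\wedge$, one is in the sliding subcase: the $V$ at $(r, \lambda_r + 1)$ slides upward through consecutive $\wedge$'s of column $\lambda_r + 1$, reaching row $1$ iff that column is empty in $\lambda$ (forcing $\lambda_r = \lambda_1$) and $\alpha$'s column $\lambda_1 + 1$ holds $\wedge$'s at each of rows $1, \ldots, r-1$---the latter being automatic from the partition shape of $\alpha$ together with the valid-addition conditions $\alpha_{r-1} \geq \lambda_1 + 1$ and $\alpha_r = \lambda_1$.

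The combinatorial outcome is that the third case contributes exactly when rows $1, \ldots, r$ of $\lambda$ are equal (forcing $r \leq j + 1$ by hypothesis) and the rightmost column of $\alpha$---which is then column $\lambda_1 + 1$---contains exactly $r - 1 \in \{1, \ldots, j\}$ $\wedge$'s. Writing $p$ for the number of $\wedge$'s in the rightmost column of $\alpha$, this matches the stated condition $1 \leq p \leq j$ for a second generated representation; otherwise only the $r = 1$ contribution remains. The stabilization from degree $|\lambda| + k + 1$ onward and the second assertion of the lemma both follow by replaying the analysis with $\lambda$ replaced by $\lambda + (\ell, 0, \ldots)$ for $\ell \geq 1$: this partition has first row strictly longer than its second, so $j = 0$ forces only the $r = 1$ contribution, every representation at such subsequent degrees generates its own $M_\nu$, and in particular if $\lambda_1 > \lambda_2$ at the outset, the same argument gives $M_\lambda \tnsr \ExtAlg^k V \niso \Drsum_{\alpha \in VS(\lambda, k)} M_\alpha$.

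The hard part will be the sliding subcase of the third case: identifying the precise column-containment condition under which $V$ slides all the way to row $1$ requires simultaneous control of the sliding rule from Section~\ref{sec:Emb} and of the partition-shape constraints on $\alpha$. A secondary difficulty is the multiplicity-one argument in the second case, where one must verify that the unique equivalent labeling of $\bbS_\eta$ in the other placement genuinely fails to place $V$ in the first row.
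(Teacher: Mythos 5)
Your proposal is correct and follows essentially the same route as the paper: both classify the summands $\bbS_{\eta}$ of $V \tnsr \bbS_{\alpha}$ according to whether the equivalent labeled diagram in the placement $(V \tnsr \bbS_{\mu}) \tnsr \ExtAlg^k V$ puts the "$V$" in the first row, using the multiplicity-one and sliding rules of Section~\ref{sec:Emb}, and both dispose of the stabilization and the $\lambda_1 > \lambda_2$ case by noting that $\lambda + (\ell, 0, \cdots)$ has strictly longer first row. Your case split by the row $r$ receiving the "$V$" is a slightly more systematic bookkeeping of the same argument (and, like the paper's proof, it correctly pins down that the doubled column must be the newly created column $\lambda_1 + 1$).
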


\begin{proof}
Let $\bbS_{\alpha} \into \bbS_{\mu} \tnsr \ExtAlg^k V$ be a direct summand and mark the extra boxes of $\alpha$ that were added to $\mu$ with "$\ExtAlg$"'s. If the right-most column  of $\alpha$ consists of $s$ "$\ExtAlg$"'s and the $(s+1)$-st row of $\alpha$ is one box shorter then the row $s$ of $\alpha$ then $\bbS_{\alpha}$ will generate two representations in the next degree: one obtained by adding "$V$" to the first row and then switching it with the "$\ExtAlg$" and the other one by adding "$V$" to the row $(s+1)$  and then sliding it up to the first row. Otherwise, $\bbS_{\alpha}$ will only generate one representation in the next degree - the one obtained from adjoining "$V$" to the first row (and possibly switching it with "$\ExtAlg$" if there is one in the first row). But recall that $\mu$ itself was obtained from $\lambda$ by adjoining boxes to the first row, so nothing starting from the next-to-lowest degree can generate more than one representation. 
\end{proof}

Now we are ready to calculate $\Tor^R_{\bullet}(M_{\lambda}, \Bbbk)$. This is our main result. In particular, it implies that the resolution of $M_{\lambda}$ is linear.

\begin{theorem}
\label{thm:main}
\begin{eqnarray*}
\Tor^R_i(M_{\lambda}, \Bbbk) \niso \underset{\eta \in B}{\Drsum} \bbS_{\eta}
\end{eqnarray*}

where the set $B$ is the set of all representations $\eta$ obtained from $\lambda$ by adding $i$ boxes according to the Pieri rule (for exterior powers), but no box is added to the first row.

\end{theorem}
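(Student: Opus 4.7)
The plan is to compute $\Tor^R_\bullet(M_\lambda, \Bbbk)$ as the homology of the complex $K_\bullet := M_\lambda \tnsr_\Bbbk \ExtAlg^\bullet V$, obtained by tensoring $M_\lambda$ over $R$ with the Koszul resolution of $\Bbbk$. Since the Koszul differential is $\GL(V)$-equivariant and preserves total grading, $K_\bullet$ splits as a direct sum, over $m \geq 0$, of strand subcomplexes
\[
K_\bullet^{(|\lambda|+m)} \colon \bbS_\lambda \tnsr \ExtAlg^m V \to \bbS_{\lambda+(1)} \tnsr \ExtAlg^{m-1} V \to \cdots \to \bbS_{\lambda+(m)},
\]
whose differentials factor through the defining multiplication $V \tnsr \bbS_{\lambda+(c)} \onto \bbS_{\lambda+(c+1)}$ on $M_\lambda$. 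By Pieri, each term $C_j^{(m)} := \bbS_{\lambda+(m-j)} \tnsr \ExtAlg^j V$ decomposes as $\Drsum_{\eta \in VS(\lambda+(m-j), j)} \bbS_\eta$ with multiplicities $0$ or $1$, and Schur's lemma further splits $K_\bullet^{(|\lambda|+m)}$ into isotypic subcomplexes $K_\bullet^{(|\lambda|+m), \eta}$, one for each partition $\eta$ of $|\lambda|+m$.

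Next I would classify these isotypic subcomplexes. Writing $\rho := \eta - \lambda$ for the skew shape and $\rho_1$ for the number of boxes of $\rho$ lying in row $1$, the condition $\eta \in VS(\lambda+(m-j), j)$ unpacks to (a) $\rho$ has no two boxes in the same row outside row $1$, together with (b) $j \in \{m-\rho_1,\, m-\rho_1+1\} \cap [0, m]$. Hence when (a) fails, $K_\bullet^{(|\lambda|+m), \eta}=0$; when (a) holds and $\rho_1 = 0$, the subcomplex equals $\bbS_\eta$ placed at the single position $j = m$; and when (a) holds with $\rho_1 \geq 1$, it is a two-term strand $\bbS_\eta \xrightarrow{d} \bbS_\eta$ in positions $j = m-\rho_1+1$ and $j = m-\rho_1$. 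The decisive claim is that $d$ is a nonzero scalar in the last case, making the subcomplex acyclic. Granting this, $H_j(K_\bullet^{(|\lambda|+m)}) = 0$ for $j \neq m$, while $H_m(K_\bullet^{(|\lambda|+m)}) = \Drsum_\eta \bbS_\eta$ summed over $\eta$ such that $\eta-\lambda$ is a vertical strip of $m$ boxes with no box in row $1$; specializing $m = i$ yields the theorem, and the vanishing at other strands simultaneously establishes linearity of the minimal free resolution of $M_\lambda$.

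The main obstacle is the nonvanishing of $d$ when $\rho_1 \geq 1$, and here I would use the labeled-diagram analysis of Section~\ref{sec:Emb}. Factor the Koszul differential as
\[
\bbS_{\lambda+(m-j)}\tnsr\ExtAlg^j V \xrightarrow{1 \tnsr \Delta} \bbS_{\lambda+(m-j)}\tnsr V\tnsr \ExtAlg^{j-1}V \xrightarrow{\mu \tnsr 1} \bbS_{\lambda+(m-j+1)}\tnsr\ExtAlg^{j-1}V,
\]
where $\Delta$ is the appropriate component of the comultiplication on $\ExtAlg V$, $\mu$ is the projection $V \tnsr \bbS_{\lambda+(m-j)} \onto \bbS_{\lambda+(m-j+1)}$ coming from the $M_\lambda$ structure, and the implicit signed transposition is absorbed into $\mu$. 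At $j = m-\rho_1+1$, the labeled diagram of the embedding $\bbS_\eta \into \bbS_{\lambda+(m-j)} \tnsr \ExtAlg^j V$ carries exactly one ``$\ExtAlg$'' in row $1$ along with $m-\rho_1$ further ``$\ExtAlg$''s in distinct rows $\geq 2$. Relabeling the row-$1$ ``$\ExtAlg$'' as ``$V$'' yields, under the bracket change $\bbS_{\lambda+(m-j)}\tnsr V\tnsr \ExtAlg^{j-1}V \iso (V\tnsr \bbS_{\lambda+(m-j)})\tnsr \ExtAlg^{j-1}V$, a diagram already having ``$V$'' in row $1$, which survives the projection $\mu$; the $\sim$-equivalence of Section~\ref{sec:Emb} identifies this piece with the canonical Pieri inclusion of $\bbS_\eta$ into $\bbS_{\lambda+(m-j+1)}\tnsr \ExtAlg^{j-1}V$, which is manifestly nonzero and can be further pinned down via Lemma~\ref{lem:Sam}. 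A direct evaluation on a highest-weight vector of $\bbS_\eta$ then verifies that the remaining contributions---obtained by relabeling one of the row-$(\geq 2)$ ``$\ExtAlg$''s---cannot cancel this leading term, so $d$ is a nonzero scalar multiple of the canonical isomorphism $\bbS_\eta \to \bbS_\eta$, completing the argument.
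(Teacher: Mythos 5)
Your proposal is correct and works with the same complex $M_{\lambda} \tnsr_R K^{\bullet}$, but it organizes the computation differently and, more importantly, establishes the crucial nonvanishing by a genuinely different argument. The paper chases the complex degree by degree: it isolates a single Claim --- that the differential is nonzero on each summand $\bbS_{\alpha} \subset \bbS_{\lambda}\tnsr\ExtAlg^iV$ having a box added to the first row --- propagates it upward along the submodules $<\bbS_{\alpha}> \niso M_{\alpha}$ described by Lemma~\ref{lem:str} to kill everything above the lowest degree, and proves the Claim \emph{indirectly}: were that component zero, the vanishing of the alternating sum of multiplicities of $\bbS_{\alpha}$ would force $\bbS_{\alpha}$, and hence the entire infinite-dimensional submodule it generates in $M_{\lambda}\tnsr\ExtAlg^{i-1}V$, into cohomology, contradicting finite generation of $\Tor$. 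Your strand-plus-isotypic decomposition compresses the degree chase into one uniform statement (each isotypic subcomplex is a single term or a two-term complex $\bbS_{\eta}\to\bbS_{\eta}$), at the price of needing the nonvanishing at every pair of positions $(m-\rho_1+1,\, m-\rho_1)$ rather than only at the bottom of each term; and you prove it \emph{directly} with the labeled-diagram calculus of Section~\ref{sec:Emb} instead of the Euler-characteristic trick. That direct route does go through, and is in fact simpler than your last step suggests: no cancellation can occur, because each coproduct term obtained by relabeling a row-$(\geq 2)$ ``$\ExtAlg$'' as ``$V$'' keeps the ``$V$'' in its column under the $\sim$-equivalence, hence factors through a summand of $V\tnsr\bbS_{\lambda+(m-j,0,\cdots)}$ other than $\bbS_{\lambda+(m-j+1,0,\cdots)}$ and is annihilated by the multiplication $\mu$ of $M_{\lambda}$; only the row-$1$ term survives, and by Schur's lemma it is a nonzero multiple of the canonical inclusion of $\bbS_{\eta}$ into the target. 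The trade-off is clear: the paper's argument avoids all coproduct bookkeeping but leans on the global module structure and finite generation of $\Tor$; yours is local and self-contained and makes the linearity of the resolution immediate, but the sign and coefficient tracking in the coassociativity diagram --- the one step you assert rather than carry out --- is exactly where the real work lies.
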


\begin{proof}

The tautological Koszul complex is a projective resolution of $\Bbbk$:

\begin{figure}[h!]

\begin{tikzpicture}
\matrix(m) [matrix of math nodes, 
row sep=0.5em, column sep=0.5em, 
text height=1.5ex, text depth=0.25ex]
{K^{\bullet} \colon 0 & \underset{-n}{R \underset{\Bbbk}{\tnsr} \ExtAlg^n V} & R \underset{\Bbbk}{\tnsr} \ExtAlg^{n-1} V & \cdots & R \underset{\Bbbk}{\tnsr} \ExtAlg^2 V & R \underset{\Bbbk}{\tnsr} V & \underset{0}{R} & 0\\};

\path[->,font=\scriptsize] 
(m-1-1) edge (m-1-2)
(m-1-2) edge (m-1-3) 
(m-1-3) edge (m-1-4) 
(m-1-4) edge (m-1-5)
(m-1-5) edge (m-1-6)
(m-1-6) edge (m-1-7)
(m-1-7) edge (m-1-8);
\end{tikzpicture}
\end{figure}
\FloatBarrier

After tensoring (over $R$) with $M_{\lambda}$ we get the complex:

\begin{figure}[h!]

\begin{tikzpicture}
\matrix(m) [matrix of math nodes, 
row sep=0.5em, column sep=0.5em, 
text height=1.5ex, text depth=0.25ex]
{M_{\lambda} \underset{R}{\tnsr} K^{\bullet} \colon 0 & \underset{-n}{M_{\lambda} \underset{\Bbbk}{\tnsr} \ExtAlg^n V} & M_{\lambda} \underset{\Bbbk}{\tnsr} \ExtAlg^{n-1} V & \cdots & M_{\lambda} \underset{\Bbbk}{\tnsr} \ExtAlg^2 V & M_{\lambda} \underset{\Bbbk}{\tnsr} V & \underset{0}{M_{\lambda}} & 0\\};

\path[->,font=\scriptsize] 
(m-1-1) edge (m-1-2)
(m-1-2) edge (m-1-3) 
(m-1-3) edge (m-1-4) 
(m-1-4) edge (m-1-5)
(m-1-5) edge (m-1-6)
(m-1-6) edge (m-1-7)
(m-1-7) edge (m-1-8);
\end{tikzpicture}
\end{figure}
\FloatBarrier

The differential $d^{-i} \colon M_{\lambda} \tnsr \ExtAlg^iV \to M_{\lambda} \tnsr \ExtAlg^{i-1}V$ is determined where the lowest degree piece of $M_{\lambda} \tnsr \ExtAlg^iV$ (namely $\bbS_{\lambda} \tnsr \ExtAlg^{i}V$) goes. Its image has to be in $\bbS_{\lambda+(1,0, \cdots)} \tnsr \ExtAlg^{i-1} V$ because the differential is a map of graded modules. 

Consider the direct summand $\bbS_{\alpha}$ of $\bbS_{\lambda} \tnsr \ExtAlg^{i}V$ where $\alpha$ was obtained from $\lambda$ by adding $i$ boxes according to the Pieri rule and one of the boxes is added to the first row. Such a representation also occurs in $\bbS_{\lambda+(1,0, \cdots)} \tnsr \ExtAlg^{i-1} V$ and \emph{nowhere else}. Let us for now assume the following claim: 

\begin{claim}

The differential has to be nonzero when restricted to such representation. 

\end{claim}

The image of the entire submodule generated by $\bbS_{\alpha}$ in $M_{\lambda} \tnsr \ExtAlg^iV$ has to be the submodule generated by $\bbS_{\alpha}$ in $M_{\lambda} \tnsr \ExtAlg^{i-1}V$, which is isomorphic to $M_{\alpha}$ because $d^{-i}$ puts $\bbS_{\alpha}$ in the next-to-lowest degree of $M_{\lambda} \tnsr \ExtAlg^{i-1}V$. Moreover, all the representations in  $M_{\lambda} \tnsr \ExtAlg^iV$ that also appear in $<\bbS_{\alpha}> \subset M_{\lambda} \tnsr \ExtAlg^{i-1}V$ are generated by $\bbS_{\alpha}$ in $M_{\lambda} \tnsr \ExtAlg^iV$.

Now we want to show that nothing from degree $|\lambda|+i+1$ (next-to-lowest degree of $M_{\lambda} \tnsr \ExtAlg^i V$) or higher can contribute to the cohomology $H^{-i}(M_{\lambda} \underset{R}{\tnsr} K^{\bullet})$. From the structure of the modules $M_{\lambda} \tnsr \ExtAlg^k V$ it is clear that it is enough to prove that nothing from degree $|\lambda|+i+1$ contributes to cohomology.

So, consider next-to-lowest degree of $M_{\lambda} \tnsr \ExtAlg^i V$ which is $\bbS_{\lambda+(1,0,\cdots)} \tnsr \ExtAlg^i V$. All the representations in this degree have either one or two more boxes than $\lambda$ in the first row. If there is two more boxes, it means that this summand $\bbS_{\nu}$ was obtained from adding one box to the first row to a summand $\bbS_{\eta}$ of $\bbS_{\lambda} \tnsr \ExtAlg^iV$ which had a box added to the first row of $\lambda$. By the claim $\bbS_{\eta}$ has to map to something nonzero under the differential and because the image of the submodule generated by $\bbS_{\eta}$ in $M_{\lambda} \tnsr \ExtAlg^i V$ has to map to a submodule isomorphic to $M_{\eta}$ we see that differential restricted to $\bbS_{\nu}$ is non-zero, so $\bbS_{\nu}$ will not contribute to cohomology.

If we take a summand $\bbS_{\nu}$ of $\bbS_{\lambda+(1,0,\cdots)} \tnsr \ExtAlg^i V$ such that first row of $\nu$ is longer than the first row of $\lambda$ only by one box then it means that there are representations in the lowest degree of the previous term of the complex which have to map to it, so summands of this kind also contribute nothing to cohomology.

Thus only representations in the lowest degree of $M_{\lambda} \tnsr \ExtAlg^iV$ will contribute to cohomology, and it will be those which are in the kernel of the differential $d^{-i}$ - the ones obtained from $\lambda$ by adding boxes by the Pieri rule with no box added to the first row.

It remains to prove the claim. Assume the contrary. Let us recall a standard argument: given a partition $\alpha$ consider the function which takes finite complex of $A$-modules:

\[
C^{\bullet} \colon \cdots \to C^{i-1} \to C^i \to C^{i+1} \to \cdots
\]

to $\Sigma_i (-1)^i \dim_{\Bbbk} \Hom_{\Bbbk[G]} (\bbS_{\alpha}, C^i)$ - the alternating sum of numbers of copies of $\bbS_{\alpha}$ in the terms of the complex. One can calculate this function either from a complex $C^{\bullet}$ or from its cohomology $HC^{\bullet}$. When we calculate this function from the complex $M_{\lambda} \underset{R}{\tnsr} K^{\bullet}$ itself, we get zero. This means we should also get zero when we calculate it from  $H(M_{\lambda} \underset{R}{\tnsr} K^{\bullet})$. Clearly there is one copy of $\bbS_{\alpha}$ in $H^{-i}(M_{\lambda} \underset{R}{\tnsr} K^{\bullet})$, because it is not in the image of a differential $d^{-i-1}$ by degree considerations, and we assumed that it is in the kernel of the differential $d^{-i} \colon M_{\lambda} \tnsr \ExtAlg^iV \to M_{\lambda} \tnsr \ExtAlg^{i-1}V$. So to cancell this, it must be that $\bbS_{\alpha}$ contributes to the cohomology group $H^{-i+1}(M_{\lambda} \underset{R}{\tnsr} K^{\bullet})$. In particular, the whole submodule generated by $\bbS_{\alpha}$ in $M_{\lambda} \tnsr \ExtAlg^{i-1}V$ is in the kernel of $d^{-i+1}$.  

The entire submodule generated by $\bbS_{\alpha}$ in $M_{\lambda} \tnsr \ExtAlg^iV$ has to go to zero under the differential $d^{-i}$, but this means that nothing hits the submodule generated by $\bbS_{\alpha}$ in $M_{\lambda} \tnsr \ExtAlg^{i-1}V$  (which is isomorphic to $M_{\alpha}$). We see that the entire submodule generated by $\bbS_{\alpha}$ in $M_{\lambda} \tnsr \ExtAlg^{i-1}V$ has to contribute to the cohomology $H^{-i+1}(M_{\lambda} \underset{R}{\tnsr} K^{\bullet})$. This is a contradiction because this group has to be finite-dimensional over $\Bbbk$, because all our modules are finetely generated. 

\end{proof}

\begin{example}
We illustrate the above proof with the complex $M_{(1,1,0)} \underset{R}{\tnsr} K^{\bullet}$

\begin{figure}[H]

\begin{tikzpicture}[scale=1.0]

\node at (0,0) {{\scriptsize\young(\hfil,\hfil)}};
\node at (0,1.5) {{\scriptsize\young(\hfil \hfil,\hfil)}};
\node at (0,3) {$\vdots$};
\node at (0,-1.5) {$M_{(1,1,0)}$};

\node at (-2,3) {{\scriptsize\young(\hfil \hfil \ExtAlg,\hfil)}};
\node at (-2,4.5) {$\vdots$};

\node at (-3,1.5) {{\scriptsize\young(\hfil \ExtAlg,\hfil)}};
\node at (-3,3) {{\scriptsize\young(\hfil \hfil,\hfil \ExtAlg)}};
\node at (-3,4.5) {$\vdots$};

\node at (-4,1.5) {{\scriptsize\young(\hfil,\hfil,\ExtAlg)}};
\node at (-4,3) {{\scriptsize\young(\hfil \hfil,\hfil,\ExtAlg)}};
\node at (-4,4.5) {$\vdots$};
\node at (-3,0) {$M_{(1,1,0) \tnsr V}$};

\node at (-6,4.5) {{\scriptsize\young(\hfil \hfil,\hfil \ExtAlg,\ExtAlg)}};
\node at (-6,6) {$\vdots$};

\node at (-7,3) {{\scriptsize\young(\hfil \ExtAlg,\hfil,\ExtAlg)}};
\node at (-7,4.5) {{\scriptsize\young(\hfil \hfil \ExtAlg,\hfil,\ExtAlg)}};
\node at (-7,6) {$\vdots$};

\node at (-8,3) {{\scriptsize\young(\hfil \ExtAlg,\hfil \ExtAlg)}};
\node at (-8,4.5) {{\scriptsize\young(\hfil \hfil \ExtAlg,\hfil \ExtAlg)}};
\node at (-8,6) {$\vdots$};
\node at (-7,1.5) {$M_{(1,1,0) \tnsr \ExtAlg^2V}$};

\node at (-10,4.5) {{\scriptsize\young(\hfil \ExtAlg,\hfil \ExtAlg,\ExtAlg)}};
\node at (-10,6) {$\vdots$};
\node at (-10,3) {$M_{(1,1,0) \tnsr \ExtAlg^3 V}$};

\path[->,font=\scriptsize] 

(0,0) edge[shorten <= .5cm, shorten >= .0cm] (0,1)

(-2,3) edge[shorten <= .5cm, shorten >= .5cm] (-2,4.5)
(-3,1.5) edge[shorten <= .5cm, shorten >= .5cm] (-3,3)
(-3,1.5) edge[shorten <= .5cm, shorten >= .5cm] (-2,3)
(-3,3) edge[shorten <= .5cm, shorten >= .5cm] (-3,4.5)
(-4,1.5) edge[shorten <= .5cm, shorten >= .5cm] (-4,3)
(-4,3) edge[shorten <= .5cm, shorten >= .5cm] (-4,4.5)

(-6,4.5) edge[shorten <= .5cm, shorten >= .5cm] (-6,6)
(-7,4.5) edge[shorten <= .5cm, shorten >= .5cm] (-7,4.5)
(-7,4.5) edge[shorten <= .5cm, shorten >= .5cm] (-7,6)
(-7,3) edge[shorten <= .5cm, shorten >= .5cm] (-6,4.5)

(-8,3) edge[shorten <= .5cm, shorten >= .5cm] (-8,4.5)
(-8,4.5) edge[shorten <= .5cm, shorten >= .5cm] (-8,6)

(-10,4.5) edge[shorten <= .5cm, shorten >= .5cm] (-10,6)

(-3,1.5) edge[shorten <= .5cm, shorten >= .5cm] (0,1.5)
(-8,3) edge[shorten <= .5cm, shorten >= .5cm] (-3,2.8)
(-7,3) edge[shorten <= .5cm, shorten >= .5cm] (-4,3.2)
(-10,4.5) edge[shorten <= .5cm, shorten >= .5cm] (-6,4.5)

(-7,3) edge[shorten <= .5cm, shorten >= .5cm] (-7,4.5);

\end{tikzpicture}

\caption{$M_{(1,1,0)} \tnsr K^{\bullet}$}
\end{figure}
\FloatBarrier

As we see from the diagram the there are only two cohomology groups, namely $H^0(M_{(1,1,0)} \underset{R}{\tnsr} K^{\bullet})=\scriptsize\young(\hfil,\hfil)$ and $H^1(M_{(1,1,0)} \underset{R}{\tnsr} K^{\bullet})=\scriptsize\young(\hfil,\hfil,\hfil)$ as it should be by the above theorem.
 
\end{example}

\begin{example}
The tautological Koszul complex and its cut-offs which resolve the modules of cycles is a familiar example of resolutions for $M_{\lambda}$ where $\lambda$ is a partition corresponding to an exterior power.
\end{example}

\begin{example}
Note that our results agree with the known resolutions of powers of the maximal ideal generated by $V$ in $R$. See Remark 2.2 in \cite{Guard}.
\end{example}

\begin{example}
Let us continue with $R \tnsr \Sym_2 V$, where dimension of $V$ is $3$. We saw that $<\bbS_{(2,2,0)}> \niso M_{(2,2,0)}$. The resolution of this module should look as follows:

\begin{figure}[h!]

\begin{tikzpicture}
\matrix(m) [matrix of math nodes, 
row sep=0.5em, column sep=0.5em, 
text height=1.5ex, text depth=0.25ex]
{0 & \underset{-1} {R \tnsr {\scriptsize\young(\hfil \hfil,\hfil \hfil,\hfil)}} & \underset{0}{R \tnsr {\scriptsize\young(\hfil \hfil,\hfil \hfil)}} & 0\\};

\path[->,font=\scriptsize] 
(m-1-1) edge (m-1-2)
(m-1-2) edge (m-1-3)
(m-1-3) edge (m-1-4);
\end{tikzpicture}
\end{figure}

and indeed this is the answer produced by Macaulay 2.
\end{example}

\begin{example}
Now let us look at $(R \tnsr \Sym_2 V)/<\bbS_{(2,2,0)}>$ (dimension of $V$ is still $3$). We saw that the submodule generated by $\bbS_{(2,1,0)}$ in this quotient is isomorphic to $M_{(2,1,0)}$. So the resolution is

\begin{figure}[h!]

\begin{tikzpicture}
\matrix(m) [matrix of math nodes, 
row sep=0.5em, column sep=0.5em, 
text height=1.5ex, text depth=0.25ex]
{0 & \underset{-2}{R \tnsr {\scriptsize\young(\hfil \hfil,\hfil \hfil,\hfil)}} & R \tnsr ({\scriptsize\young(\hfil \hfil,\hfil \hfil)} \Drsum {\scriptsize\young(\hfil \hfil,\hfil,\hfil)}) & \underset{0}{R \tnsr {\scriptsize\young(\hfil \hfil,\hfil)}} & 0\\};

\path[->,font=\scriptsize] 
(m-1-1) edge (m-1-2)
(m-1-2) edge (m-1-3)
(m-1-3) edge (m-1-4)
(m-1-4) edge (m-1-5);
\end{tikzpicture}
\end{figure}

and indeed this is the answer produced by Macaulay 2.
\end{example}

\subsection{Using the Geometric Method}
\label{subsec:Geom}

One can use the geometric method (Theorem 5.1.2 of \cite{Wey}) to prove our Theorem~\ref{thm:main}. Consider the projective space $\bbP:=\Proj R = \Proj \Sym V$. We have the (exact) tautological sequence of vector bundles:

\[
0 \to \scrptR \to V^* \times \bbP \to \scrptQ \to 0
\]

Where $\scrptR$ is the tautological rank $1$ subbundle of the trivial bundle $\scrptR:=\{(v,[L]) \in V^* \times \bbP | v \in L\}$.

Given a partition $\lambda=(\lambda_1, \lambda_2, \cdots \lambda_n)$, let $\gamma$ be the partition $\gamma=(\lambda_2, \lambda_3, \cdots \lambda_n)$. Define the vector bundle $\scrptV:=(\Sym_{\lambda_1} \scrptR^*) \tnsr \bbS_{\gamma} \scrptQ^*$. Now Theorem 5.1.2 of \cite{Wey} tells us how to construct the complex $F(\scrptV)_{\bullet}$ which in our case will be the minimal free resolution of the graded $R$-module $H^0(\bbP, \Sym(\scrptR^*) \tnsr \scrptV)$. In degree $d$ of $H^0(\bbP, \Sym \scrptR^* \tnsr \scrptV)$ we have $H^0(\bbP, \Sym_d \scrptR^* \tnsr \Sym_{\lambda_1} \scrptR^* \tnsr \bbS_{\gamma} \scrptQ^*) \niso H^0(\bbP, \Sym_{\lambda_1+d} \scrptR^* \tnsr \bbS_{\gamma} \scrptQ^*)$ because $\scrptR^*$ is locally free of rank $1$. Using Corollary 4.1.9. of \cite{Wey} this cohomology is just $\bbS_{\lambda+(d,0,\cdots)}V$. So the complex $F(\scrptV)_{\bullet}$ will resolve $M_{\lambda}$.

The terms of the complex $F(\scrptV)_{\bullet}$ are

\[
F(\scrptV)_i= R \tnsr \underset{j \geq 0}{\Drsum} H^j(\bbP, \ExtAlg^{i+j} \scrptQ^* \tnsr \scrptV)=R \tnsr \underset{j \geq 0}{\Drsum} H^j(\bbP, \Sym_{\lambda_1} \scrptR^* \tnsr (\ExtAlg^{i+j} \scrptQ^* \tnsr \bbS_{\gamma} \scrptQ^*))
\]

Again, using Corollary 4.1.9 and Bott's Algorithm 4.1.5 of \cite{Wey} we see that we get the same answer as in our Theorem~\ref{thm:main}.

\section{Resolutions of Truncations of $M_{\lambda}$}
\label{sec:ResTr}

By examining the proof of Theorem~\ref{thm:main} we see that it should also work for truncations of $M_{\lambda}$'s, but some care needs to be taken for what happens at the top of each term in the complex.  There will be two contributions to cohomology: one from the top and one from the bottom. Hopefully this explains the choice of letters "$T$" and "$B$" below.

\begin{theorem}
\label{thm:main2}
Suppose $l \geq 2$, then

\[
\Tor^R_i(M_{\lambda}/V^l M_{\lambda}, \Bbbk) \niso (\underset{\eta \in B}{\Drsum} \bbS_{\eta}) \drsum (\underset{\eta \in T}{\Drsum} \bbS_{\eta})
\]

where the set $B$ is the set of all representations $\eta$ obtained from $\lambda$ by adding $i$ boxes according to the Pieri rule (for exterior powers), but no box is added to the first row. The set $T$ is the set of all representations obtained from $\lambda+(l-1, 0, \cdots)$ by adding $i$ boxes according to Pieri rule (for exterior powers) and one box is added to the first row. Equivalently, the set $T$ consists of all representations obtained from $\lambda+(l,0, \cdots)$ by adding $i-1$ boxes according to Pieri rule for exterior powers, but no box is added to the first row.  

When $l=1$:

\[
\Tor^R_i(M_{\lambda}/V M_{\lambda}, \Bbbk) \niso \underset{\eta \in VS(\lambda, i)}{\Drsum} \bbS_{\eta}
\]

\end{theorem}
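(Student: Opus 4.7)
The plan is to deduce Theorem~\ref{thm:main2} from Theorem~\ref{thm:main} via the short exact sequence of graded equivariant modules
\[
0 \to V^l M_\lambda \to M_\lambda \to M_\lambda/V^l M_\lambda \to 0.
\]
The crucial preliminary observation is that the submodule is itself elementary: by definition $V^l M_\lambda$ contains exactly the representations $\bbS_{\lambda+(k,0,\cdots)}$ for $k\geq l$, with the inherited multiplication, so $V^l M_\lambda \cong M_{\lambda+(l,0,\cdots)}$ as graded $R$-modules, with its lowest piece sitting in internal degree $|\lambda|+l$.

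Applying $\Tor^R_\bullet(-,\Bbbk)$ produces a long exact sequence. Theorem~\ref{thm:main} shows that $\Tor^R_j(M_\mu,\Bbbk)$ is concentrated in the single internal degree $|\mu|+j$ for any partition $\mu$; hence $\Tor^R_j(M_\lambda,\Bbbk)$ lives in degree $|\lambda|+j$ while $\Tor^R_j(V^l M_\lambda,\Bbbk)=\Tor^R_j(M_{\lambda+(l,0,\cdots)},\Bbbk)$ lives in degree $|\lambda|+l+j$. Since every map in the long exact sequence preserves internal degree, the induced map $\Tor^R_j(V^l M_\lambda,\Bbbk)\to\Tor^R_j(M_\lambda,\Bbbk)$ vanishes for every $j$ as soon as $l\geq 1$. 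The long exact sequence therefore collapses to short exact sequences
\[
0 \to \Tor^R_i(M_\lambda,\Bbbk) \to \Tor^R_i(M_\lambda/V^l M_\lambda,\Bbbk) \to \Tor^R_{i-1}(V^l M_\lambda,\Bbbk) \to 0,
\]
which split as graded $GL(V)$-representations by complete reducibility.

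It remains to identify the two outer terms and to handle the $l=1$ case. Theorem~\ref{thm:main} identifies the first term with $\Drsum_{\eta\in B}\bbS_\eta$, and applied to $M_{\lambda+(l,0,\cdots)}$ at homological degree $i-1$ it identifies the third term with the direct sum over those $\eta'$ obtained from $\lambda+(l,0,\cdots)$ by adding $i-1$ Pieri boxes with no box in the first row---which is precisely the second (hence first) description of the set $T$. For $l\geq 2$ the two summands occupy the distinct internal degrees $|\lambda|+i$ and $|\lambda|+l+i-1$, matching the stated decomposition. For $l=1$ one either verifies the combinatorial identity $B\sqcup T = VS(\lambda,i)$ (a vertical strip of size $i$ has either zero or exactly one box in row $1$), or simply observes that $M_\lambda/VM_\lambda = \bbS_\lambda$ is concentrated in a single degree and thus $\Tor^R_i(\bbS_\lambda,\Bbbk)=\bbS_\lambda\tnsr\ExtAlg^i V$ decomposes by Pieri as $\Drsum_{\eta\in VS(\lambda,i)}\bbS_\eta$.

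There is no substantial obstacle: the entire argument is driven by the linearity of the resolutions of elementary modules, i.e.\ the degree-concentration of $\Tor^R_j(M_\mu,\Bbbk)$ provided by Theorem~\ref{thm:main}. If one prefers the more hands-on approach the author alludes to, one repeats the Koszul-complex analysis of Theorem~\ref{thm:main} directly on $(M_\lambda/V^l M_\lambda)\tnsr_\Bbbk \ExtAlg^\bullet V$: the set $B$ arises at the bottom of the complex exactly as before, while $T$ arises at the top, because $d^{-i}$ restricted to $\bbS_{\lambda+(l-1,0,\cdots)}\tnsr \ExtAlg^i V$ would naturally land in the degree $|\lambda|+l+i-1$ piece of the target, which has been truncated away, forcing these summands into the kernel.
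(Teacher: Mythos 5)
Your proof is correct, but it takes a genuinely different route from the paper. The paper proves Theorem~\ref{thm:main2} by re-running the Koszul-complex analysis of Theorem~\ref{thm:main} directly on $(M_{\lambda}/V^l M_{\lambda}) \underset{R}{\tnsr} K^{\bullet}$, arguing degree by degree which representations at the ``top'' of each term fail to be hit by the differential (yielding $T$) while the ``bottom'' contribution $B$ carries over unchanged; this is exactly the hands-on alternative you sketch in your last paragraph. You instead deduce the result formally from Theorem~\ref{thm:main} via the long exact sequence of $\Tor$ attached to $0 \to V^l M_{\lambda} \to M_{\lambda} \to M_{\lambda}/V^l M_{\lambda} \to 0$, using the identification $V^l M_{\lambda} \niso M_{\lambda+(l,0,\cdots)}$ (which the paper records when defining truncations) and the linearity of the resolutions of elementary modules to kill the maps $\Tor^R_j(V^l M_{\lambda},\Bbbk) \to \Tor^R_j(M_{\lambda},\Bbbk)$ by internal-degree comparison; semisimplicity of $\GL(V)$-representations then splits the resulting short exact sequences. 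Your argument is cleaner and less error-prone than the paper's, since it avoids the delicate bookkeeping at the top of the truncated complex, and it makes the ``equivalently'' clause in the description of $T$ transparent: $T$ is literally $\Tor^R_{i-1}(M_{\lambda+(l,0,\cdots)},\Bbbk)$ as computed by Theorem~\ref{thm:main}. What the paper's approach buys in exchange is an explicit picture of where each cohomology class sits inside the complex, consistent with its overall expository style. Your handling of $l=1$ (either via $B \sqcup T = VS(\lambda,i)$ or via the observation that $M_{\lambda}/VM_{\lambda}=\bbS_{\lambda}$ forces all Koszul differentials to vanish) matches the paper's degree argument and is fine.
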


We will find the resolution of $M_{\lambda}/V M_{\lambda}$ in a different way in the next section.

\begin{proof}

We will use the complex $(M_{\lambda}/V^l M_{\lambda}) \underset{R}{\tnsr} K^{\bullet}$ to compute the $\Tor$.

First, the case $l=1$. By degree reasons the differentials in the complex are all $0$ and the result follows immediately.

Now let $l \geq 2$. Let us find $H^{-i}(M_{\lambda}/V^l M_{\lambda} \underset{R}{\tnsr} K^{\bullet})$ All the arguments from the proof of Theorem~\ref{thm:main} remain valid, so the contribution $\underset{\eta \in B}{\Drsum} \bbS_{\eta}$ is definitely there.

The top of the term $M_{\lambda}/V^l M_{\lambda} \tnsr \ExtAlg^iV$ has nowhere to go under the differential for degree reasons, so now it remains to see which of the summands of the top of this term are in the image of the differential. In fact, this is determined by what happens in the next-to-lowest degree of $M_{\lambda}/V^l M_{\lambda} \tnsr \ExtAlg^iV$. Each representation at the top of $M_{\lambda}/V^l M_{\lambda} \tnsr \ExtAlg^i$ is obtained from some representation in the next-to-the lowest degree by adjoining $l-2$ boxes to the first row. From the proof of Theorem~\ref{thm:main} we know that each representation in the next-to-lowest degree which has the first row longer than the first row of $\lambda$ just by one box are in the image. It means that all the representations at the top which were generated by these are also in the image, thus they do not contribute to cohomology. Thus the only representations at the top that do contribute to cohomology are the ones generated by a representation in next-to-lowest degree whose first row is longer than the first row of $\lambda$ by two boxes. These are exactly the representations from $T$.

\end{proof}
 
\section{Simple Objects}
\label{sec:SimOb}

The simple objects in $A-\fmod$ are the irreducible representations $\bbS_{\lambda}$ where all the variables act by zero.

Consider (again!) the tautological Koszul complex $K^{\bullet}$, which is the projective resolution of $\Bbbk=\bbS_{\emptyset}$:

\begin{figure}[h!]

\begin{tikzpicture}
\matrix(m) [matrix of math nodes, 
row sep=0.5em, column sep=0.5em, 
text height=1.5ex, text depth=0.25ex]
{K^{\bullet} \colon 0 & \underset{-n}{R \underset{\Bbbk}{\tnsr} \ExtAlg^n V} & R \underset{\Bbbk}{\tnsr} \ExtAlg^{n-1} V & \cdots & R \underset{\Bbbk}{\tnsr} \ExtAlg^2 V & R \underset{\Bbbk}{\tnsr} V & \underset{0}{R} & 0\\};

\path[->,font=\scriptsize] 
(m-1-1) edge (m-1-2)
(m-1-2) edge (m-1-3) 
(m-1-3) edge (m-1-4) 
(m-1-4) edge (m-1-5)
(m-1-5) edge (m-1-6)
(m-1-6) edge (m-1-7)
(m-1-7) edge (m-1-8);
\end{tikzpicture}
\end{figure}

If we now apply the exact functor $\_\underset{\Bbbk}{\tnsr} \bbS_{\lambda}$ we will get a projective resolution of the simple module $\bbS_{\lambda}$:

\begin{figure}[h!]

\begin{tikzpicture}
\matrix(m) [matrix of math nodes, 
row sep=0.5em, column sep=0.5em, 
text height=1.5ex, text depth=0.25ex]
{K^{\bullet}\underset{\Bbbk}{\tnsr}\bbS_{\lambda} \colon 0 & \underset{-n}{R \underset{\Bbbk}{\tnsr} 
(\ExtAlg^n V\underset{\Bbbk}{\tnsr}\bbS_{\lambda})} & R \underset{\Bbbk}{\tnsr} (\ExtAlg^{n-1} V \underset{\Bbbk}{\tnsr}\bbS_{\lambda}) & \cdots & R \underset{\Bbbk}{\tnsr} (\ExtAlg^2 V\underset{\Bbbk}{\tnsr}\bbS_{\lambda}) & R \underset{\Bbbk}{\tnsr} (V \underset{\Bbbk}{\tnsr}\bbS_{\lambda}) & \underset{0}{R\underset{\Bbbk}{\tnsr}(\bbS_{\lambda}}) & 0\\};

\path[->,font=\scriptsize] 
(m-1-1) edge (m-1-2)
(m-1-2) edge (m-1-3) 
(m-1-3) edge (m-1-4) 
(m-1-4) edge (m-1-5)
(m-1-5) edge (m-1-6)
(m-1-6) edge (m-1-7)
(m-1-7) edge (m-1-8);
\end{tikzpicture}
\end{figure}

Note that the terms in this resolution are the same as predicted by Theorem~\ref{thm:main2}. By degree considerations, when we apply $\Hom_A(\_, \bbS_{\eta})$ to the above resolution, all the differentials become $0$. Now we can calculate $\Ext_{A}^{\bullet}(\bbS_{\lambda}, \bbS_{\eta})$:

\begin{eqnarray*}
\Ext_{A}^i(\bbS_{\lambda}, \bbS_{\eta})=\Hom_A(R\underset{\Bbbk}{\tnsr} (\ExtAlg^i V \underset{\Bbbk}{\tnsr} \bbS_{\lambda}), \bbS_{\eta}) \niso \Hom_R(R\underset{\Bbbk}{\tnsr} (\ExtAlg^i V \underset{\Bbbk}{\tnsr} \bbS_{\lambda}), \bbS_{\eta})^G \niso  \\
\Hom_{\Bbbk}(\ExtAlg^i V \underset{\Bbbk}{\tnsr} \bbS_{\lambda}, \bbS_{\eta})^G \niso
\begin{cases}
\Bbbk \text{ if $\bbS_{\eta}$ occurs in $\ExtAlg^i V \underset{\Bbbk}{\tnsr} \bbS_{\lambda}$} \\
0 \text{ otherwise }
\end{cases}
\end{eqnarray*}

One can imagine the $1$-extension which corresponds to the nonzero element of $\Ext_{A}^1(\bbS_{\lambda}, \bbS_{\eta})$. For example, when $\eta=\scriptsize\young(\hfil \hfil \hfil,\hfil \hfil)$ and $\lambda=\scriptsize\young(\hfil \hfil \hfil,\hfil)$ ($\eta$ has one box added to the second row of $\lambda$) the $1$-extension is the complex:

\begin{figure}[h!]

\begin{tikzpicture}[scale=1.4]

\node at (-5,1) {$0$};
\node at (-4,1) {{\scriptsize\young(\hfil \hfil \hfil,\hfil \hfil)}};
\node at (-2,1) {{\scriptsize\young(\hfil \hfil \hfil,\hfil \hfil)}};
\node at (-2,0) {{\scriptsize\young(\hfil \hfil \hfil,\hfil)}};
\node at (0,0) {{\scriptsize\young(\hfil \hfil \hfil,\hfil)}};
\node at (1,0) {$0$};
\path[->,font=\scriptsize] 

(-5,1) edge[shorten <= .5cm, shorten >= .5cm] (-4,1)
(-4,1) edge[shorten <= .5cm, shorten >= .5cm] (-2,1)
(-2,0) edge[shorten <= .5cm, shorten >= .5cm] (-2,1) 
(-2,0) edge[shorten <= .5cm, shorten >= .5cm] (0,0) 
(0,0) edge[shorten <= .5cm, shorten >= .5cm] (1,0);

\end{tikzpicture}

\end{figure}
\FloatBarrier

where in the middle we have the lattice of a small equivariant module which only has two representations in it, the map on the left is the inclusion and map on the right is the projection.

\end{document}